\newcommand{\subj}[1]{\par\noindent{\bf AMS Subject Classifications: }#1.}
\newcommand{\keyw}[1]{\par\noindent{\bf Keywords: }#1.}
\numberwithin{equation}{section}
\numberwithin{figure}{section}
\newtheorem{theorem}{Theorem}[section]
\newtheorem{lemma}[theorem]{Lemma}
\newtheorem{corollary}[theorem]{Corollary}
\theoremstyle{definition}
\newtheorem{definition}[theorem]{Definition}
\newtheorem{example}[theorem]{Example}
\theoremstyle{remark}
\newtheorem{remark}[theorem]{Remark}
\date{}
\newcommand{\ijde}
{\vspace{-1in}\normalsize\flushleft
This is a preprint of a paper whose final and definite form will be published in:\\
Int. J. Difference Equ. ({\tt http://campus.mst.edu/ijde}).\\
Submitted Aug 27, 2012; Revised Nov 14, 2012; Accepted Nov 15, 2012.\\\vspace{1mm}\hrule\vspace{5mm}}
\begin{document}

\title{\ijde \center\Large\bf The Delta-nabla Calculus of Variations
for Composition Functionals on Time Scales}

\author{{\bf Monika Dryl} and {\bf Delfim F. M. Torres}\\
{\tt \{monikadryl, delfim\}@ua.pt}\\[0.3cm]
Center for Research and Development in Mathematics and Applications\\
Department of Mathematics, University of Aveiro\\
3810-193 Aveiro, Portugal}

\maketitle

\thispagestyle{empty}

% ------------------------------------------------

\begin{abstract}
We develop the calculus of variations on time scales
for a functional that is the composition of a certain scalar function
with the delta and nabla integrals of a vector valued field.
Euler--Lagrange equations, transversality conditions,
and necessary optimality conditions for isoperimetric problems,
on an arbitrary time scale, are proved.
Interesting corollaries and examples are presented.
\end{abstract}

% ------------------------------------------------

\subj{26E70, 49K05}

\keyw{calculus of variations, optimality conditions, time scales}

% ------------------------------------------------

\section{Introduction}

We study a general problem of the calculus of variations
on an arbitrary time scale $\mathbb{T}$. More precisely,
we consider the problem of extremizing (i.e., minimizing or maximizing)
a delta-nabla integral functional
\begin{multline*}
\mathcal L(x)=H\left(\int\limits_{a}^{b}f_{1}(t,x^{\sigma}(t),x^{\Delta}(t))\Delta t,
\ldots,\int\limits_{a}^{b}f_{k}(t,x^{\sigma}(t),x^{\Delta}(t))\Delta t,\right.\\
\left.\int\limits_{a}^{b}f_{k+1}(t,x^{\rho}(t),x^{\nabla}(t))\nabla t,\ldots,
\int\limits_{a}^{b}f_{k+n}(t,x^{\rho}(t),x^{\nabla}(t)) \nabla t\right)
\end{multline*}
possibly subject to boundary conditions and/or isoperimetric constraints.
For the interest in studying such type of variational problems in economics,
we refer the reader to \cite{MalinowskaTorresCompositionDelta} and references therein.
For a review on general approaches to the calculus of variations on time scales,
which allow to obtain both delta and nabla variational calculus
as particular cases, see \cite{GirejkoMalinowska,TorresDeltaNabla,MyID:212}.
Throughout the text we assume the reader to be familiar
with the basic definitions and results of time scales
\cite{BohnerDEOTS,MBbook2003,Hilger90,Hilger97}.

The article is organized as follows.
In Section~\ref{sec:prelim} we collect some necessary definitions
and theorems of the nabla and delta calculus on time scales.
The main results are presented in Section~\ref{sec:mr}.
We begin by proving general Euler--Lagrange equations (Theorem~\ref{main}).
Next we consider the situations when initial or terminal boundary conditions are not specified,
obtaining corresponding transversality conditions (Theorems~\ref{punkt pocz} and \ref{punkt kon}).
The results are applied to quotient variational problems in Corollary~\ref{cor iloraz}.
Finally, we prove necessary optimality conditions for general isoperimetric problems
given by the composition of delta-nabla integrals (Theorem~\ref{twierdzenie iso}).
We end with Section~\ref{sec:examples}, illustrating the new results of the paper
with several examples.

% -------------------------------------

\section{Preliminaries}
\label{sec:prelim}

In this section we review the main results necessary in the sequel.
For basic definitions, notations and results of the theory of time scales,
we refer the reader to the books \cite{BohnerDEOTS,MBbook2003}.

The following two lemmas are
the extension of the Dubois--Reymond
fundamental lemma of the calculus of variations \cite{book:vanBrunt}
to the nabla (Lemma~\ref{Dubois-Reymond nabla})
and delta (Lemma~\ref{Dubois-Reymond delta})
time scale calculus. We remark that all intervals
in this paper are time scale intervals.

\begin{lemma}[\cite{natalia:CV}]
\label{Dubois-Reymond nabla}
Let $f\in C_{ld}([a,b],\mathbb{R})$. If
$$
\int\limits_{a}^{b}f(t)\eta^{\nabla}(t)\nabla t=0 \
\textrm{ for all } \ \eta \in C^{1}_{ld}([a,b],\mathbb{R}) \
\textrm{ with } \
\eta(a)=\eta(b)=0,
$$
then $f(t)=c$, for some constant $c$, for all $t\in [a,b]_{\kappa}$.
\end{lemma}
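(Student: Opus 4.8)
The plan is to reduce the hypothesis, by a suitable choice of the test function $\eta$, to the statement that the nabla integral of a non-negative ld-continuous function vanishes. First I would introduce the constant
\[
c:=\frac{1}{b-a}\int_{a}^{b}f(\tau)\,\nabla\tau
\]
and define $\eta(t):=\int_{a}^{t}\bigl(f(\tau)-c\bigr)\,\nabla\tau$ for $t\in[a,b]$. Since $f\in C_{ld}$, also $f-c\in C_{ld}$, so the fundamental theorem of the nabla calculus gives $\eta\in C^{1}_{ld}([a,b],\mathbb{R})$ with $\eta^{\nabla}(t)=f(t)-c$ on $[a,b]_{\kappa}$. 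Clearly $\eta(a)=0$, and by the very choice of $c$ one has $\eta(b)=\int_{a}^{b}f(\tau)\,\nabla\tau-c(b-a)=0$; hence $\eta$ is an admissible variation.

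Next I would apply the hypothesis to this particular $\eta$. Observing that $\int_{a}^{b}\bigl(f(t)-c\bigr)\,\nabla t=\eta(b)-\eta(a)=0$, and therefore also $c\int_{a}^{b}\bigl(f(t)-c\bigr)\,\nabla t=0$, subtraction yields
\[
0=\int_{a}^{b}f(t)\,\eta^{\nabla}(t)\,\nabla t=\int_{a}^{b}f(t)\bigl(f(t)-c\bigr)\,\nabla t=\int_{a}^{b}\bigl(f(t)-c\bigr)^{2}\,\nabla t .
\]

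It remains to deduce from this that $f(t)=c$ for every $t\in[a,b]_{\kappa}$. Here I would invoke the standard fact that a non-negative ld-continuous function with vanishing nabla integral over $[a,b]$ is identically zero on $[a,b]_{\kappa}$: at a left-scattered point $t$ the integral contains the isolated term $\nu(t)\bigl(f(t)-c\bigr)^{2}\ge 0$ with $\nu(t)>0$, which forces $f(t)=c$ there, while at a left-dense point the conclusion follows by a contradiction argument using ld-continuity, exactly as in the classical Dubois--Reymond lemma. I expect this last step to be the only genuinely delicate point, precisely because on an arbitrary time scale the scattered and dense points have to be treated separately; everything else is a direct computation once the right $\eta$ has been produced.
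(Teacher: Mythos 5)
This lemma is quoted in the paper from \cite{natalia:CV} without an internal proof, so there is nothing in the text to compare against; your argument is the standard proof of the nabla Dubois--Reymond lemma and is essentially the one given in that reference (and in Bohner's delta analogue). The choice of $\eta(t)=\int_a^t\bigl(f(\tau)-c\bigr)\nabla\tau$ with $c$ the mean value, the resulting identity $\int_a^b\bigl(f(t)-c\bigr)^2\nabla t=0$, and the concluding case split between left-scattered points (where the term $\nu(t)\bigl(f(t)-c\bigr)^2$ forces $f(t)=c$) and left-dense points (where ld-continuity does) are all correct.
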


\begin{lemma}[\cite{Bohner:CV}]
\label{Dubois-Reymond delta}
Let $f\in C_{rd}([a,b],\mathbb{R})$. If
$$
\int\limits_{a}^{b}f(t)\eta^{\Delta}(t)\Delta t=0 \
\textrm{ for all } \ \eta \in C^{1}_{rd}([a,b],\mathbb{R}) \
\textrm{ with }\
\eta(a)=\eta(b)=0,
$$
then $f(t)=c$, for some constant $c$, for all $t\in [a,b]^{\kappa}$.
\end{lemma}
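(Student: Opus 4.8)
The plan is to mimic the classical proof of the Dubois--Reymond lemma \cite{book:vanBrunt}, now carried out in the delta calculus. First I would define the constant
$$
c := \frac{1}{b-a}\int_{a}^{b} f(\tau)\,\Delta\tau ,
$$
and introduce the auxiliary function $\eta(t) := \int_{a}^{t}\bigl(f(\tau)-c\bigr)\,\Delta\tau$. Since $f\in C_{rd}([a,b],\mathbb{R})$, the integrand $\tau\mapsto f(\tau)-c$ is rd-continuous, so $\eta$ is well defined, delta differentiable with $\eta^{\Delta}(t)=f(t)-c$, and $\eta^{\Delta}\in C_{rd}$; hence $\eta\in C^{1}_{rd}([a,b],\mathbb{R})$. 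Moreover $\eta(a)=0$ trivially, and $\eta(b)=\int_{a}^{b}f(\tau)\,\Delta\tau-c(b-a)=0$ by the choice of $c$, so that $\eta$ is an admissible variation (the degenerate case $a=b$ being vacuous).

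Next I would feed this specific $\eta$ into the hypothesis, which gives $\int_{a}^{b}f(t)\eta^{\Delta}(t)\,\Delta t=0$. On the other hand, for every admissible $\eta$ the fundamental theorem of the delta calculus yields $\int_{a}^{b}c\,\eta^{\Delta}(t)\,\Delta t=c\bigl(\eta(b)-\eta(a)\bigr)=0$; subtracting,
$$
\int_{a}^{b}\bigl(f(t)-c\bigr)\eta^{\Delta}(t)\,\Delta t=0 .
$$
Since by construction $\eta^{\Delta}(t)=f(t)-c$, this is precisely $\int_{a}^{b}\bigl(f(t)-c\bigr)^{2}\,\Delta t=0$.

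Finally I would invoke the sign of the integrand: the map $t\mapsto\bigl(f(t)-c\bigr)^{2}$ is rd-continuous and nonnegative on $[a,b]^{\kappa}$ with vanishing delta integral over $[a,b]$, and a standard property of the delta integral then forces $\bigl(f(t)-c\bigr)^{2}=0$, i.e. $f(t)=c$, for all $t\in[a,b]^{\kappa}$, which is the assertion.

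The argument is short, and the only places that need some care are the regularity bookkeeping — that the delta antiderivative of an rd-continuous function really belongs to $C^{1}_{rd}$, so that $\eta$ is genuinely admissible — and the closing implication that a nonnegative rd-continuous function whose delta integral vanishes must itself vanish on $[a,b]^{\kappa}$; both rest on the elementary theory of the delta integral on time scales rather than on anything particular to the present setting. I expect this last implication to be the main (albeit mild) obstacle, since it is the step where the structure of the time scale, through the precise set $[a,b]^{\kappa}$ on which the conclusion holds, actually enters.
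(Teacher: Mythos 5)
Your proof is correct, and it is the standard Dubois--Reymond argument: the paper itself states this lemma without proof, citing Bohner's \emph{Calculus of variations on time scales}, where essentially the same construction (the variation $\eta(t)=\int_a^t\bigl(f(\tau)-c\bigr)\Delta\tau$ with $c$ the mean value, leading to $\int_a^b\bigl(f(t)-c\bigr)^2\Delta t=0$) is used. The two points you flag -- that the delta antiderivative of an rd-continuous function lies in $C^1_{rd}$, and that a nonnegative rd-continuous function with vanishing delta integral vanishes on $[a,b]^{\kappa}$ -- are indeed the only details requiring care, and both are standard facts of the delta integral.
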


Under some assumptions, it is possible to relate the delta and nabla derivatives
(Theorem~\ref{polaczenie rozniczka delta i nabla}) as well as
the delta and nabla integrals (Theorem~\ref{polaczenie calka delta i nabla}).

\begin{theorem}[\cite{AticiGreen:functions}]
\label{polaczenie rozniczka delta i nabla}
If $f:\mathbb{T}\rightarrow\mathbb{R}$ is delta differentiable
on $\mathbb{T}^{\kappa}$ and $f^{\Delta}$ is continuous on $\mathbb{T}^{\kappa}$,
then $f$ is nabla differentiable on  $\mathbb{T}_{\kappa}$ and
\begin{equation}
\label{polaczenie rozniczka nabla}
f^{\nabla}(t)=(f^{\Delta})^{\rho}(t) \
\textrm{ for all } \
t\in\mathbb{T}_{\kappa}.
\end{equation}
If $f:\mathbb{T}\rightarrow\mathbb{R}$ is nabla differentiable on
$\mathbb{T}_{\kappa}$ and $f^{\nabla}$ is continuous on $\mathbb{T}_{\kappa}$,
then $f$ is delta differentiable on $\mathbb{T}^{\kappa}$ and
\begin{equation}
\label{polaczenie rozniczka delta}
f^{\Delta}(t)=(f^{\nabla})^{\sigma}(t) \
\textrm{ for all } \
t\in\mathbb{T}^{\kappa}.
\end{equation}
\end{theorem}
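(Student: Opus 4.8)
The plan is to establish the first implication in detail and to obtain the second from it by time reversal: passing to $-\mathbb{T}=\{-t:t\in\mathbb{T}\}$ interchanges $\Delta$ with $\nabla$, $\sigma$ with $\rho$, and rd-continuity with ld-continuity, so the second statement is simply the first one read on $-\mathbb{T}$ (equivalently, one repeats the argument below with the evident symmetric changes). Thus I would assume $f$ delta differentiable on $\mathbb{T}^{\kappa}$ with $f^{\Delta}$ continuous, fix $t\in\mathbb{T}_{\kappa}$, and split the proof according to whether $t$ is left-scattered or left-dense.

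If $t$ is left-scattered, then $\rho(t)<t$; since $\mathbb{T}$ is a closed set, no point of $\mathbb{T}$ lies strictly between $\rho(t)$ and $t$, so $\sigma(\rho(t))=t$ and $\mu(\rho(t))=t-\rho(t)=\nu(t)$, and moreover $\rho(t)\in\mathbb{T}^{\kappa}$. Hence $f^{\Delta}(\rho(t))$ is defined and, $\rho(t)$ being right-scattered, equals $\bigl(f(\sigma(\rho(t)))-f(\rho(t))\bigr)/\mu(\rho(t))=\bigl(f(t)-f(\rho(t))\bigr)/\nu(t)$. On the other hand $f$, being delta differentiable on $\mathbb{T}^{\kappa}$, is continuous there, and it is trivially continuous at $t$ also when $t=\max\mathbb{T}$, since a left-scattered maximum is an isolated point of $\mathbb{T}$; a function continuous at a left-scattered point is nabla differentiable there, with nabla derivative $\bigl(f(t)-f(\rho(t))\bigr)/\nu(t)$. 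Comparing the two expressions gives $f^{\nabla}(t)=f^{\Delta}(\rho(t))=(f^{\Delta})^{\rho}(t)$.

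If $t$ is left-dense, then $\rho(t)=t$ and $(f^{\Delta})^{\rho}(t)=f^{\Delta}(t)$, so it remains to show that $f$ is nabla differentiable at $t$ with $f^{\nabla}(t)=f^{\Delta}(t)$. When $t$ is moreover right-dense this is immediate, because both $f^{\Delta}(t)$ and $f^{\nabla}(t)$ equal, by definition, the limit of $\bigl(f(t)-f(s)\bigr)/(t-s)$ as $s\to t$ in $\mathbb{T}$, which exists by hypothesis.

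The remaining case, $t$ left-dense and right-scattered, is the heart of the matter and the only place where the continuity of $f^{\Delta}$ — not merely its existence — is used; a jump of $f^{\Delta}$ across a right-scattered point would destroy the nabla differentiability of $f$ there, so this is the step I expect to be the main obstacle. Here I would invoke the fundamental theorem of the delta calculus: $f^{\Delta}$ is continuous, hence rd-continuous, and $f$ is one of its antiderivatives, so $f(t)-f(s)=\int_{s}^{t}f^{\Delta}(\tau)\,\Delta\tau$ for every $s\in\mathbb{T}$ with $s<t$. Given $\varepsilon>0$, continuity of $f^{\Delta}$ at $t$ furnishes $\delta>0$ with $|f^{\Delta}(\tau)-f^{\Delta}(t)|<\varepsilon$ whenever $\tau\in\mathbb{T}$ and $|\tau-t|<\delta$, and then for $t-\delta<s<t$,
\[
\left|\frac{f(t)-f(s)}{t-s}-f^{\Delta}(t)\right|
=\left|\frac{1}{t-s}\int_{s}^{t}\bigl(f^{\Delta}(\tau)-f^{\Delta}(t)\bigr)\,\Delta\tau\right|
\le\frac{1}{t-s}\int_{s}^{t}\bigl|f^{\Delta}(\tau)-f^{\Delta}(t)\bigr|\,\Delta\tau\le\varepsilon .
\]
Since $t$ is right-scattered, $\mu(t)>0$, so a sufficiently small neighbourhood of $t$ in $\mathbb{T}$ contains no point exceeding $t$; the estimate above then says precisely that $f$ is nabla differentiable at $t$ with $f^{\nabla}(t)=f^{\Delta}(t)=(f^{\Delta})^{\rho}(t)$. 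This finishes the first implication, and the second follows as indicated.
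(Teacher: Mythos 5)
The paper does not prove this theorem: it is stated as a quoted result from Atici and Guseinov \cite{AticiGreen:functions}, so there is no internal proof to compare against. Judged on its own, your argument is correct and complete. The case analysis covers all of $\mathbb{T}_{\kappa}$: at a left-scattered $t$ you correctly match the two quotient formulas via $\sigma(\rho(t))=t$ and $\mu(\rho(t))=\nu(t)$, and you take care of the boundary subtleties ($\rho(t)\in\mathbb{T}^{\kappa}$, continuity of $f$ at an isolated left-scattered maximum); at a point that is both left- and right-dense the two derivatives are literally the same limit; and in the genuinely delicate case (left-dense, right-scattered) you rightly identify that continuity of $f^{\Delta}$ --- not mere existence --- is what is needed, and the fundamental-theorem estimate $\bigl|\frac{f(t)-f(s)}{t-s}-f^{\Delta}(t)\bigr|\le\varepsilon$ together with the observation that a small neighbourhood of a right-scattered $t$ contains no points above $t$ gives exactly the nabla differentiability required. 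The time-reversal reduction for the second implication is also sound, since $s\mapsto f(-s)$ on $-\mathbb{T}$ interchanges $\sigma$ with $\rho$ and $\Delta$ with $\nabla$ precisely as you describe. This is essentially the standard proof of the result in the time-scales literature.
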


\begin{theorem}[\cite{G:G:S}]
\label{polaczenie calka delta i nabla}
Let $a,b\in\mathbb{T}$ with $a<b$.
If function $f:\mathbb{T}\rightarrow\mathbb{R}$ is continuous, then
\begin{equation}
\label{polaczenie calka delta}
\int\limits_{a}^{b}f(t)\Delta t=\int\limits_{a}^{b}f^{\rho}(t)\nabla t,
\end{equation}
\begin{equation}
\label{polaczenie calka nabla}
\int\limits_{a}^{b}f(t)\nabla t=\int\limits_{a}^{b}f^{\sigma}(t)\Delta t.
\end{equation}
\end{theorem}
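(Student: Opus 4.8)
The plan is to reduce both identities to the Fundamental Theorem of Calculus on time scales, combined with the pointwise relation between the delta and nabla derivatives recorded in Theorem~\ref{polaczenie rozniczka delta i nabla}. The underlying idea is that a single antiderivative of $f$ can be evaluated at the endpoints in two ways, once via a delta integral and once via a nabla integral.

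First I would treat \eqref{polaczenie calka delta}. Since $f$ is continuous on $\mathbb{T}$, it is rd-continuous, hence it admits a delta antiderivative $F$, i.e. a function with $F^{\Delta}(t)=f(t)$ for all $t\in\mathbb{T}^{\kappa}$; by the definition of the delta integral, $\int_{a}^{b}f(t)\,\Delta t=F(b)-F(a)$. Now $F^{\Delta}=f$ is continuous on $\mathbb{T}^{\kappa}$, so Theorem~\ref{polaczenie rozniczka delta i nabla} applies to $F$ and gives that $F$ is nabla differentiable on $\mathbb{T}_{\kappa}$ with $F^{\nabla}(t)=(F^{\Delta})^{\rho}(t)=f^{\rho}(t)$. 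Because $f$ is continuous, $f^{\rho}=f\circ\rho$ is ld-continuous, hence nabla integrable, and the Fundamental Theorem of Calculus in the nabla setting yields $\int_{a}^{b}f^{\rho}(t)\,\nabla t=\int_{a}^{b}F^{\nabla}(t)\,\nabla t=F(b)-F(a)$. Comparing the two expressions gives \eqref{polaczenie calka delta}. The identity \eqref{polaczenie calka nabla} is proved symmetrically: continuity of $f$ gives ld-continuity, hence a nabla antiderivative $G$ with $G^{\nabla}=f$ on $\mathbb{T}_{\kappa}$ and $\int_{a}^{b}f(t)\,\nabla t=G(b)-G(a)$; since $G^{\nabla}=f$ is continuous, the second half of Theorem~\ref{polaczenie rozniczka delta i nabla} shows $G$ is delta differentiable on $\mathbb{T}^{\kappa}$ with $G^{\Delta}(t)=(G^{\nabla})^{\sigma}(t)=f^{\sigma}(t)$, and since $f^{\sigma}$ is rd-continuous and therefore delta integrable, $\int_{a}^{b}f^{\sigma}(t)\,\Delta t=G(b)-G(a)$.

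The only genuinely delicate points, none of them deep, are the regularity bookkeeping: that a continuous function on a time scale is simultaneously rd- and ld-continuous (so that the relevant antiderivatives exist and all four integrals make sense), and that $f^{\rho}$ and $f^{\sigma}$ inherit ld- respectively rd-continuity from continuity of $f$, even though $\rho$ and $\sigma$ themselves need not be continuous. These facts are standard and can be quoted from \cite{BohnerDEOTS,MBbook2003}. I expect the conceptual crux to be precisely the observation that the same endpoint difference $F(b)-F(a)$ is produced by the two integrals; once Theorem~\ref{polaczenie rozniczka delta i nabla} is invoked there is no real obstacle.
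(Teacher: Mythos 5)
The paper offers no proof of this statement: it is quoted as a preliminary result with a citation to \cite{G:G:S}, so there is nothing internal to compare against. Your argument is correct and is essentially the standard one from that reference --- pass to a delta antiderivative $F$, use Theorem~\ref{polaczenie rozniczka delta i nabla} to see that $F$ is also a nabla antiderivative of $f^{\rho}$, and evaluate both integrals as $F(b)-F(a)$ --- and you have correctly flagged the only delicate regularity points (that continuity gives both rd- and ld-continuity, and that $f^{\rho}$, $f^{\sigma}$ remain integrable).
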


% -------------------------------------

\section{Main results}
\label{sec:mr}

By $C_{k,n}^{1}([a,b],\mathbb{R})$ we denote the class of functions
$x:[a,b]\rightarrow\mathbb{R}$ such that:
if $n=0$, then $x^{\Delta}$ is continuous on $[a,b]^{\kappa}$;
if $k=0$, then $x^{\nabla}$ is continuous on $[a,b]_{\kappa}$;
if $k\neq 0$ and $n\neq 0$, then
$x^{\Delta}$ is continuous on $[a,b]^{\kappa}_{\kappa}$ and
$x^{\nabla}$ is continuous on $[a,b]_{\kappa}^{\kappa}$,
where $[a,b]^{\kappa}_{\kappa}:=[a,b]^{\kappa}\cap [a,b]_{\kappa}$.
We consider the following problem of calculus of variations:
\begin{multline}
\label{problem}
\mathcal L(x)=H\left(\int\limits_{a}^{b}f_{1}(t,x^{\sigma}(t),x^{\Delta}(t))\Delta t,
\ldots,\int\limits_{a}^{b}f_{k}(t,x^{\sigma}(t),x^{\Delta}(t))\Delta t,\right.\\
\left.\int\limits_{a}^{b}f_{k+1}(t,x^{\rho}(t),x^{\nabla}(t))\nabla t,\ldots,
\int\limits_{a}^{b}f_{k+n}(t,x^{\rho}(t),x^{\nabla}(t))\nabla t\right)
\longrightarrow \textrm{extr},
\end{multline}
\begin{equation}
\label{punkty}
(x(a)=x_{a}), \quad (x(b)=x_{b}),
\end{equation}
where ``extr'' means ``minimize'' or ``maximize''.
The parentheses in \eqref{punkty}, around the end-point conditions,
means that those conditions may or may not occur
(it is possible that both $x(a)$ and $x(b)$ are free).
A function $x\in C_{k,n}^{1}$ is said to be admissible provided
it satisfies the boundary conditions \eqref{punkty} (if any is given).
For $k = 0$ problem \eqref{problem} reduces to a nabla problem
(no delta integral and delta derivative is present);
for $n = 0$ problem \eqref{problem} reduces to a delta problem
(no nabla integral and nabla derivative is present). We assume that:
\begin{enumerate}
\item the function $H:\mathbb{R}^{n+k}\rightarrow\mathbb{R}$
has continuous partial derivatives with respect to its arguments,
which we denote by $H_{i}^{'}$, $i=1,\ldots,n+k$;
\item functions $(t,y,v)\rightarrow f_{i}(t,y,v)$
from $[a,b]\times\mathbb{R}^{2}$ to $\mathbb{R}$, $i=1,\ldots, n+k$,
have partial continuous derivatives with respect to $y$ and $v$
for all $t \in [a,b]$, which we denote by $f_{iy}$ and $f_{iv}$;
\item $f_{i}$, $f_{iy}$, $f_{iv}$ are continuous
on $[a,b]^{\kappa}$, $i=1,\ldots,k$, and continuous on
$[a,b]_{\kappa}$, $i=k+1,\ldots,k+n$, for all $x\in C_{k,n}^{1}$.
\end{enumerate}

The following norm in $C_{k,n}^{1}$ is considered:
$$
||x||_{1,\infty}:=||x^{\sigma}||_{\infty}+||x^{\Delta}||_{\infty}
+||x^{\rho}||_{\infty}+||x^{\nabla}||_{\infty},
$$
where $||x||_{\infty}:= \sup |x(t)|$.

\begin{definition}
We say that an admissible function $\hat{x}$ is a weak local minimizer
(respectively weak local maximizer) to problem \eqref{problem}--\eqref{punkty}
if there exists $\delta >0$ such that
$\mathcal{L}(\hat{x})\leqslant \mathcal{L}(x)$
(respectively $\mathcal{L}(\hat{x})\geqslant \mathcal{L}(x)$)
for all admissible functions $x\in C_{k,n}^{1}$ satisfying the inequality
$||x-\hat{x}||_{1,\infty}<\delta$.
\end{definition}

For simplicity, we introduce the operators $[\cdot]^{\Delta}$ and $[\cdot]^{\nabla}$ by
$[x]^{\Delta}(t)=(t, x^{\sigma}(t),x^{\Delta}(t))$
and $[x]^{\nabla}(t)=(t,x^{\rho}(t),x^{\nabla}(t))$.
Along the text, $c$ denotes constants that are generic
and may change at each occurrence.

% -------------------------------------

\subsection{Euler--Lagrange equations}

Depending on the given boundary conditions, we can distinguish four different problems.
The first is problem $(P_{ab})$, where the two boundary conditions are specified.
To solve this problem we need a type of Euler--Lagrange necessary optimality condition.
This is given by Theorem~\ref{main} below.
Next two problems --- denoted by $(P_{a})$ and $(P_{b})$ --- occur when
$x(a)$ is given and $x(b)$ is free (problem $(P_{a})$) and when
$x(a)$ is free and $x(b)$ is specified (problem $(P_{b})$).
To solve both of them we need to use an Euler--Lagrange equation
and one transversality condition. The last problem --- denoted by  $(P)$ ---
occurs when both boundary conditions are not specified.
To find a solution for such a problem we need to use an Euler--Lagrange equation
and two transversality conditions (one at each time $a$ and $b$).
Transversality conditions are the subject of Section~\ref{sec:nbc}.

\begin{theorem}[Euler--Lagrange equations in integral form]
\label{main}
If $\hat{x}$ is a weak local solution
to problem \eqref{problem}--\eqref{punkty},
then the Euler--Lagrange equations\footnote{For brevity,
we are omitting the arguments of $H_{i}^{'}$, i.e.,
$H_{i}^{'}:=H_{i}^{'}\left(\mathcal{F}_{1}(\hat{x}),
\ldots,\mathcal{F}_{k+n}(\hat{x})\right)$,
where $\mathcal{F}_{i}(\hat{x})
=\int\limits_{a}^{b} f_{i}[\hat{x}]^\Delta(t)\Delta t$,
$i=1,\ldots,k$, and $\mathcal{F}_{i}(\hat{x})
=\int\limits_{a}^{b}f_{i}[\hat{x}]^{\nabla}(t)\nabla t$,
$i=k+1,\ldots,k+n$.}
\begin{multline}
\label{main_dla_nabla}
\sum\limits_{i=1}^{k}H_{i}^{'}\cdot
\left(f_{iv}[\hat{x}]^{\Delta}(\rho(t))-\int\limits_{a}^{\rho(t)}
f_{iy}[\hat{x}]^{\Delta}(\tau)\Delta \tau \right)\\
+\sum\limits_{i=k+1}^{k+n}H_{i}^{'}\cdot
\left(f_{iv}[\hat{x}]^{\nabla}(t)-\int\limits_{a}^{t}
f_{iy}[\hat{x}]^{\nabla}(\tau)\nabla \tau \right) = c,
\quad  t\in\mathbb{T}_{\kappa},
\end{multline}
and
\begin{multline}
\label{main_dla_delta}
\sum\limits_{i=1}^{k}H_{i}^{'}\cdot
\left(f_{iv}[\hat{x}]^{\Delta}(t)-\int\limits_{a}^{t}
f_{iy}[\hat{x}]^{\Delta}(\tau)\Delta \tau\right)\\
+\sum\limits_{i=k+1}^{k+n}H_{i}^{'}\cdot
\left(f_{iv}[\hat{x}]^{\nabla}(\sigma(t))-\int\limits_{a}^{\sigma(t)}
f_{iy}[\hat{x}]^{\nabla}(\tau)\nabla \tau\right) = c,
\quad  t\in\mathbb{T}^{\kappa},
\end{multline}
hold.
\end{theorem}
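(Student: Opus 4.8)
\emph{Proof plan.} The plan is to run the classical first-variation argument in the delta--nabla setting, reduce the vanishing of the first variation to a single Dubois--Reymond type identity (once as a nabla identity, once as a delta identity), and then invoke Lemmas~\ref{Dubois-Reymond nabla} and \ref{Dubois-Reymond delta}. The delta--nabla conversion formulas of Theorems~\ref{polaczenie rozniczka delta i nabla} and \ref{polaczenie calka delta i nabla} are the device that lets the same information be repackaged into the two forms \eqref{main_dla_nabla} and \eqref{main_dla_delta}.

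First I would fix an arbitrary variation $\eta \in C^1_{k,n}([a,b],\mathbb{R})$ with $\eta(a)=\eta(b)=0$; such $\eta$ are admissible for each of the four problems $(P_{ab})$, $(P_a)$, $(P_b)$, $(P)$, so the condition obtained is valid in all of them. For $|\varepsilon|$ small, $\hat{x}+\varepsilon\eta$ is admissible and lies in the $\|\cdot\|_{1,\infty}$-ball of radius $\delta$ about $\hat{x}$, so $\phi(\varepsilon):=\mathcal{L}(\hat{x}+\varepsilon\eta)$ has a local extremum at $\varepsilon=0$ and hence $\phi'(0)=0$. Differentiating under the integral sign (legitimate by assumptions 1--3 and a Leibniz rule on time scales) and using the chain rule for $H$, this reads
$$0=\phi'(0)=\sum_{i=1}^{k}H_i'\int_a^b\bigl(f_{iy}[\hat{x}]^\Delta(t)\,\eta^\sigma(t)+f_{iv}[\hat{x}]^\Delta(t)\,\eta^\Delta(t)\bigr)\Delta t+\sum_{i=k+1}^{k+n}H_i'\int_a^b\bigl(f_{iy}[\hat{x}]^\nabla(t)\,\eta^\rho(t)+f_{iv}[\hat{x}]^\nabla(t)\,\eta^\nabla(t)\bigr)\nabla t.$$

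Next I would eliminate the $\eta^\sigma$ and $\eta^\rho$ terms by delta, respectively nabla, integration by parts. With $A_i(t):=\int_a^t f_{iy}[\hat{x}]^\Delta(\tau)\Delta\tau$ one has $A_i^\Delta=f_{iy}[\hat{x}]^\Delta$ (the integrand is rd-continuous by assumption 3), and since $\eta(a)=\eta(b)=0$ the boundary term vanishes, giving $\int_a^b f_{iy}[\hat{x}]^\Delta\,\eta^\sigma\,\Delta t=-\int_a^b A_i(t)\eta^\Delta(t)\,\Delta t$; the nabla analogue, with $B_i(t):=\int_a^t f_{iy}[\hat{x}]^\nabla(\tau)\nabla\tau$, handles the $\eta^\rho$ terms. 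Hence $\phi'(0)=0$ becomes
$$\sum_{i=1}^{k}H_i'\int_a^b\bigl(f_{iv}[\hat{x}]^\Delta(t)-A_i(t)\bigr)\eta^\Delta(t)\,\Delta t+\sum_{i=k+1}^{k+n}H_i'\int_a^b\bigl(f_{iv}[\hat{x}]^\nabla(t)-B_i(t)\bigr)\eta^\nabla(t)\,\nabla t=0.$$

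To get \eqref{main_dla_nabla} I would turn the delta integrals into nabla integrals: the integrands $g_i:=f_{iv}[\hat{x}]^\Delta-A_i$ and $\eta^\Delta$ are continuous, so Theorem~\ref{polaczenie calka delta i nabla} gives $\int_a^b g_i\,\eta^\Delta\,\Delta t=\int_a^b g_i^\rho\,(\eta^\Delta)^\rho\,\nabla t$, and Theorem~\ref{polaczenie rozniczka delta i nabla} gives $(\eta^\Delta)^\rho=\eta^\nabla$; since $g_i^\rho(t)=f_{iv}[\hat{x}]^\Delta(\rho(t))-\int_a^{\rho(t)}f_{iy}[\hat{x}]^\Delta(\tau)\Delta\tau$, the identity takes the form $\int_a^b\Phi(t)\,\eta^\nabla(t)\,\nabla t=0$ with $\Phi$ equal to the left-hand side of \eqref{main_dla_nabla}. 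As this holds for every $\eta\in C^1_{ld}$ vanishing at $a$ and $b$, Lemma~\ref{Dubois-Reymond nabla} yields $\Phi(t)=c$ on $[a,b]_\kappa$, which is \eqref{main_dla_nabla}. Symmetrically, converting the nabla integrals into delta integrals (again via Theorems~\ref{polaczenie calka delta i nabla} and \ref{polaczenie rozniczka delta i nabla}, now with $(\eta^\nabla)^\sigma=\eta^\Delta$) and applying Lemma~\ref{Dubois-Reymond delta} produces \eqref{main_dla_delta}; the degenerate cases $k=0$ and $n=0$ are recovered by reading the relevant empty sum as $0$. I expect the only delicate points to be the rigorous justification of differentiation under the integral sign (a uniform estimate in the $\|\cdot\|_{1,\infty}$ norm) and the careful bookkeeping of the regularity classes and of the $\kappa$/${}_\kappa$ restrictions, so that the integration-by-parts formulas, the delta--nabla conversions, and the two Dubois--Reymond lemmas are each applied on the correct set.
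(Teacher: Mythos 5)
Your proposal is correct and follows essentially the same route as the paper: compute $\phi'(0)$ via the chain rule, integrate by parts to remove the $\eta^\sigma$ and $\eta^\rho$ terms, rewrite the resulting mixed delta--nabla identity purely in nabla form (respectively delta form) using Theorems~\ref{polaczenie rozniczka delta i nabla} and \ref{polaczenie calka delta i nabla}, and apply Lemmas~\ref{Dubois-Reymond nabla} and \ref{Dubois-Reymond delta}. The only cosmetic difference is that the paper retains the boundary terms of the integration by parts before specializing to $h(a)=h(b)=0$ (so as to reuse them for the transversality conditions), whereas you discard them immediately.
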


\begin{proof}
Suppose that $\mathcal{L}\left(x\right)$ has a weak local extremum at $\hat{x}$.
Consider a variation $h\in C_{k,n}^{1}$ of $\hat{x}$ for which we define the function
$\phi:\mathbb{R}\rightarrow\mathbb{R}$ by
$\phi(\varepsilon)=\mathcal{L}\left(\hat{x}+\varepsilon h\right)$.
A necessary condition for $\hat{x}$ to be an extremizer for
$\mathcal{L}\left(x\right)$ is given by
$\phi^{'}\left(\varepsilon\right)=0$ for $\varepsilon =0$.
Using the chain rule, we obtain that
\begin{multline*}
0=\phi^{'}\left(0\right)
=\sum\limits_{i=1}^{k}H_{i}^{'}\int\limits_{a}^{b}
\left(f_{iy}[\hat{x}]^{\Delta}(t)h^{\sigma}(t)
+f_{iv}[\hat{x}]^{\Delta}(t)h^{\Delta}(t)\right)\Delta t\\
+\sum\limits_{i=k+1}^{k+n}H_{i}^{'}\int\limits_{a}^{b}
\left(f_{iy}[\hat{x}]^{\nabla}(t)h^{\rho}(t)
+f_{iv}[\hat{x}]^{\nabla}(t)h^{\nabla}(t)\right)\nabla t.
\end{multline*}
Integration by parts of the first terms of both integrals gives
$$
\int\limits_{a}^{b} f_{iy}[\hat{x}]^{\Delta}(t)h^{\sigma}(t)\Delta t
=\left.\int\limits_{a}^{t}f_{iy}[\hat{x}]^{\Delta}(\tau)\Delta\tau h(t)\right|^{b}_{a}
- \int\limits_{a}^{b}\left(\int\limits_{a}^{t}
f_{iy}[\hat{x}]^{\Delta}(\tau)\Delta \tau\right) h^{\Delta}(t)\Delta t,
$$
$$
\int\limits_{a}^{b} f_{iy}[\hat{x}]^{\nabla}(t)h^{\rho}(t)\nabla t
=\left.\int\limits_{a}^{t}
f_{iy}[\hat{x}]^{\nabla}(\tau)\nabla\tau h(t)\right|^{b}_{a}
-\int\limits_{a}^{b}\left(\int\limits_{a}^{t}
f_{iy}[\hat{x}]^{\nabla}(\tau)\nabla \tau\right) h^{\nabla}(t)\nabla t.
$$
Thus, the necessary condition $\phi^{'}(0)=0$ can be written as
\begin{multline*}
\sum\limits_{i=1}^{k}H_{i}^{'}
\left[\left.\int\limits_{a}^{t}
f_{iy}[\hat{x}]^{\Delta}(\tau)\Delta\tau h(t)\right|^{b}_{a}
-\int\limits_{a}^{b}\left(\int\limits_{a}^{t}
f_{iy}[\hat{x}]^{\Delta}(\tau)\Delta \tau\right) h^{\Delta}(t)\Delta t\right.\\
+\left. \int\limits_{a}^{b}f_{iv}[\hat{x}]^{\Delta}(t)h^{\Delta}(t)\Delta t\right]
\end{multline*}
\begin{multline}
\label{glowne}
+\sum\limits_{i=k+1}^{k+n}H_{i}^{'}
\left[\left.\int\limits_{a}^{t}
f_{iy}[\hat{x}]^{\nabla}(\tau)\nabla\tau h(t)\right|^{b}_{a}
-\int\limits_{a}^{b}\left(\int\limits_{a}^{t}
f_{iy}[\hat{x}]^{\nabla}(\tau)\nabla \tau\right) h^{\nabla}(t)\nabla t\right.\\
+ \left. \int\limits_{a}^{b} f_{iv}[\hat{x}]^{\nabla}(t)h^{\nabla}(t)\nabla t\right] = 0.
\end{multline}
In particular, condition \eqref{glowne} holds for all variations
that are zero at both ends: $h(a)=h(b)=0$. Then, we obtain:
\begin{multline*}
\int\limits_{a}^{b}\sum\limits_{i=1}^{k}H_{i}^{'}h^{\Delta}(t)
\left(f_{iv}[\hat{x}]^{\Delta}(t)-\int\limits_{a}^{t}
f_{iy}[\hat{x}]^{\Delta}(\tau)\Delta \tau \right)\Delta t\\
+\int\limits_{a}^{b} \sum\limits_{i=k+1}^{k+n}H_{i}^{'}h^{\nabla}(t)
\left(f_{iv}[\hat{x}]^{\nabla}(t)-\int\limits_{a}^{t}
f_{iy}[\hat{x}]^{\nabla}(\tau)\nabla \tau \right)\nabla t = 0.
\end{multline*}
Introducing $\xi$ and $\chi$ by
\begin{equation}
\label{eq:st}
\xi(t):=\sum\limits_{i=1}^{k}H_{i}^{'}
\left(f_{iv}[\hat{x}]^{\Delta}(t)-\int\limits_{a}^{t}
f_{iy}[\hat{x}]^{\Delta}(\tau)\Delta \tau \right)
\end{equation}
and
\begin{equation}
\label{eq:rt}
\chi(t):=\sum\limits_{i=k+1}^{k+n}H_{i}^{'}
\left(f_{iv}[\hat{x}]^{\nabla}(t)-\int\limits_{a}^{t}
f_{iy}[\hat{x}]^{\nabla}(\tau)\nabla \tau \right),
\end{equation}
we then obtain the following relation:
\begin{equation}
\label{skrot}
\int\limits_{a}^{b} h^{\Delta}(t)\xi(t)\Delta t
+\int\limits_{a}^{b} h^{\nabla}(t)\chi(t)\nabla t = 0.
\end{equation}
We consider two cases. (i) Firstly,
we change the first integral of \eqref{skrot}
and we obtain two nabla-integrals and, subsequently,
the equation \eqref{main_dla_nabla}. (ii) In the second case,
we change the second integral of \eqref{skrot}
to obtain two delta-integrals,
which lead us to \eqref{main_dla_delta}.

(i) Using relation \eqref{polaczenie calka delta}
of Theorem~\ref{polaczenie calka delta i nabla}, we obtain:
$$
\int\limits_{a}^{b}\left(h^{\Delta}(t)\right)^{\rho} \xi^{\rho}(t)\nabla t
+\int\limits_{a}^{b} h^{\nabla} (t)\chi(t)\nabla t = 0.
$$
Using \eqref{polaczenie rozniczka nabla}
of Theorem~\ref{polaczenie rozniczka delta i nabla} we have
\begin{equation*}
\int\limits_{a}^{b}h^{\nabla}(t)\left(\xi^{\rho}(t)+\chi(t)\right)\nabla t = 0.
\end{equation*}
By the Dubois--Reymond Lemma~\ref{Dubois-Reymond nabla}
\begin{equation}
\label{skrot z nabla}
\xi^{\rho}(t)+\chi(t)=const
\end{equation}
and we obtain \eqref{main_dla_nabla}.

(ii) From \eqref{skrot}, and using relation \eqref{polaczenie calka nabla}
of Theorem~\ref{polaczenie calka delta i nabla},
$$
\int\limits_{a}^{b}h^{\Delta}(t)\xi(t)\Delta t
+\int\limits_{a}^{b}(h^{\nabla}(t))^{\sigma} \chi^{\sigma}(t)\Delta t = 0.
$$
Using \eqref{polaczenie rozniczka delta}
of Theorem~\ref{polaczenie rozniczka delta i nabla}, we get:
$\int\limits_{a}^{b}h^{\Delta}(t)(\xi(t) +\chi^{\sigma}(t))\Delta t = 0$.
From the Dubois--Reymond Lemma~\ref{Dubois-Reymond delta}, it follows that
$\xi(t)+ \chi^{\sigma}(t)=const$. Hence, we obtain the Euler--Lagrange
equation \eqref{main_dla_delta}.
\end{proof}

A time scale $\mathbb{T}$ is said to be regular if the following two conditions
are satisfied simultaneously for all $t\in\mathbb{T}$:
$\sigma(\rho(t))=t$ and $\rho(\sigma(t))=t$. For regular time scales,
the Euler--Lagrange equations \eqref{main_dla_nabla} and \eqref{main_dla_delta}
coincide; on a general time scale, they are different.
Such a difference is illustrated in Example~\ref{ex:1}.

\begin{example}
\label{ex:1}
Let us consider the irregular time scale
$\mathbb{T}=\mathbb{P}_{1,1}=\bigcup\limits_{k=0}^{\infty}\left[2k,2k+1\right]$.
We show that for this time scale there is a difference between
the Euler--Lagrange equations \eqref{main_dla_nabla} and \eqref{main_dla_delta}.
The forward and backward jump operators  are given by
$$
\sigma(t)=
\begin{cases}
t,\quad t\in\bigcup\limits_{k=0}^{\infty}[\left.2k,2k+1)\right.
\\t+1, \quad t\in \bigcup\limits_{k=0}^{\infty}\left\{2k+1\right\},
\end{cases}
\quad
\rho(t)=
\begin{cases}
t,\quad t\in\bigcup\limits_{k=0}^{\infty}(\left.2k,2k+1]\right.\\
t-1, \quad t\in \bigcup\limits_{k=1}^{\infty}\left\{2k\right\}\\
0, \quad t = 0.
\end{cases}
$$
For $t = 0$ and $t\in \bigcup\limits_{k=0}^{\infty}\left(2k,2k+1\right)$,
equations \eqref{main_dla_nabla} and \eqref{main_dla_delta} coincide.
We can distinguish between them for
$t\in \bigcup\limits_{k=0}^{\infty}\left\{2k+1\right\}$
and $t\in \bigcup\limits_{k=1}^{\infty}\left\{2k\right\}$.
In what follows we use the notations \eqref{eq:st} and \eqref{eq:rt}.
If $t\in \bigcup\limits_{k=0}^{\infty}\left\{2k+1\right\}$,
then we obtain from \eqref{main_dla_nabla} and \eqref{main_dla_delta}
the Euler--Lagrange equations
$\xi(t) + \chi(t) = c$ and $\xi(t) + \chi(t+1) = c$, respectively.
If $t\in \bigcup\limits_{k=1}^{\infty}\left\{2k\right\}$,
then the Euler--Lagrange equation \eqref{main_dla_nabla}
has the form $\xi(t-1) + \chi(t) = c$
while \eqref{main_dla_delta} takes the form $\xi(t) + \chi(t) = c$.
\end{example}

% -------------------------------------

\subsection{Natural boundary conditions}
\label{sec:nbc}

In this section we consider the situation when we want to minimize or maximize
the variational functional \eqref{problem},
but boundary conditions $x(a)$ and/or $x(b)$ are free.

\begin{theorem}[Transversality condition at the initial time $t = a$]
\label{punkt pocz}
Let $\mathbb{T}$ be a time scale for which $\rho(\sigma(a))=a$.
If $\hat{x}$ is a weak local solution
to \eqref{problem} with $x(a)$ not specified, then
\begin{equation}
\label{pocz dla nabla}
\sum\limits_{i=1}^{k}H_{i}^{'}
\cdot
f_{iv}[\hat{x}]^{\Delta}(a)
+\sum\limits_{i=k+1}^{k+n}H_{i}^{'}\cdot
\left(
f_{iv}[\hat{x}]^{\nabla}(\sigma(a))
- \int\limits^{\sigma(a)}_{a}f_{iy}[\hat{x}]^{\nabla}(t)\nabla t
\right) = 0
\end{equation}
holds together with the Euler--Lagrange equations
\eqref{main_dla_nabla} and \eqref{main_dla_delta}.
\end{theorem}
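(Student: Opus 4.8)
The plan is to refine the argument used to prove Theorem~\ref{main} by admitting a larger class of competing variations, namely those that are still forced to vanish at $t=b$ but are free at $t=a$. The starting point is the observation that the first-order condition $\phi'(0)=0$ in the proof of Theorem~\ref{main} yields identity \eqref{glowne} for \emph{every} $h\in C_{k,n}^{1}$, not merely for those with $h(a)=h(b)=0$. When $x(a)$ is not prescribed, the admissible variations are exactly the $h\in C_{k,n}^{1}$ with $h(b)=0$ and $h(a)$ arbitrary. For such $h$ the endpoint contributions in \eqref{glowne} all carry the factor $h(b)=0$, because the antiderivatives $\int_{a}^{t}f_{iy}[\hat{x}]^{\Delta}(\tau)\Delta\tau$ and $\int_{a}^{t}f_{iy}[\hat{x}]^{\nabla}(\tau)\nabla\tau$ vanish at $t=a$; hence \eqref{glowne} reduces to relation \eqref{skrot},
\[
\int_{a}^{b}h^{\Delta}(t)\xi(t)\Delta t+\int_{a}^{b}h^{\nabla}(t)\chi(t)\nabla t=0,
\]
now valid for all $h\in C_{k,n}^{1}$ with $h(b)=0$, where $\xi$ and $\chi$ are defined by \eqref{eq:st}--\eqref{eq:rt}.

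Next I would repeat step (i) of the proof of Theorem~\ref{main}: by \eqref{polaczenie calka delta} the first integral above equals $\int_{a}^{b}(h^{\Delta})^{\rho}(t)\,\xi^{\rho}(t)\nabla t$, and by \eqref{polaczenie rozniczka nabla} one has $(h^{\Delta})^{\rho}=h^{\nabla}$, so the identity becomes
\[
\int_{a}^{b}h^{\nabla}(t)\bigl(\xi^{\rho}(t)+\chi(t)\bigr)\nabla t=0
\]
for every admissible $h$. Here one cannot invoke Lemma~\ref{Dubois-Reymond nabla} directly, since it also requires $h(a)=0$; instead, Theorem~\ref{main} already tells us that $\xi^{\rho}(t)+\chi(t)=c$ on $\mathbb{T}_{\kappa}$ for some constant $c$, and substituting this into the last identity gives $c\int_{a}^{b}h^{\nabla}(t)\nabla t=c\bigl(h(b)-h(a)\bigr)=-c\,h(a)=0$. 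Choosing a variation with $h(a)\neq0$ forces $c=0$.

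With $c=0$, the Euler--Lagrange equation \eqref{main_dla_nabla} reads $\xi^{\rho}(t)+\chi(t)=0$ for all $t\in\mathbb{T}_{\kappa}$. It remains to evaluate this identity at $t=\sigma(a)$, which is a legitimate argument since $\sigma(a)$ lies in the set $\mathbb{T}_{\kappa}$ on which \eqref{main_dla_nabla} is asserted. Using the hypothesis $\rho(\sigma(a))=a$ we obtain $\xi^{\rho}(\sigma(a))=\xi(a)=\sum_{i=1}^{k}H_{i}^{'}f_{iv}[\hat{x}]^{\Delta}(a)$, the integral $\int_{a}^{a}f_{iy}[\hat{x}]^{\Delta}(\tau)\Delta\tau$ being zero, while $\chi(\sigma(a))=\sum_{i=k+1}^{k+n}H_{i}^{'}\bigl(f_{iv}[\hat{x}]^{\nabla}(\sigma(a))-\int_{a}^{\sigma(a)}f_{iy}[\hat{x}]^{\nabla}(t)\nabla t\bigr)$, so $\xi^{\rho}(\sigma(a))+\chi(\sigma(a))=0$ is precisely the transversality condition \eqref{pocz dla nabla}. (The same conclusion follows by evaluating \eqref{main_dla_delta} at $t=a$ after showing, in the analogous way via step (ii), that its constant also vanishes.) The Euler--Lagrange equations \eqref{main_dla_nabla}--\eqref{main_dla_delta} themselves are inherited unchanged from Theorem~\ref{main}. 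The only slightly delicate points are the bookkeeping that makes the $t=a$ endpoint terms in \eqref{glowne} disappear and the verification that $\sigma(a)$ is an admissible argument of \eqref{main_dla_nabla}; everything else is a direct specialization of computations already performed.
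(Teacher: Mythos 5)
Your proposal is correct and follows essentially the same route as the paper's own proof: both arguments reduce the first variation identity \eqref{glowne} over variations with $h(b)=0$ (and $h(a)$ free) to the conclusion that the constant in the Euler--Lagrange equation \eqref{main_dla_nabla} vanishes, and then evaluate \eqref{main_dla_nabla} at $t=\sigma(a)$ using $\rho(\sigma(a))=a$ to obtain \eqref{pocz dla nabla}. The only difference is cosmetic ordering --- you establish $c=0$ before evaluating at $\sigma(a)$, while the paper does the reverse --- so no further comparison is needed.
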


\begin{proof}
From \eqref{glowne} and \eqref{skrot z nabla} we have
$$
\sum\limits_{i=1}^{k}H_{i}^{'}
\left.\int\limits_{a}^{t}
f_{iy}[\hat{x}]^{\Delta}(\tau)\Delta \tau h(t)\right|^{b}_{a}
+\sum\limits_{i=k+1}^{k+n}H_{i}^{'}
\left.\int\limits_{a}^{t}f_{iy}[\hat{x}]^{\nabla}(\tau)\nabla \tau h(t)\right|^{b}_{a}
+\int\limits_{a}^{b}h^{\nabla}(t)\cdot c\nabla t = 0.
$$
Therefore,
$$
\sum\limits_{i=1}^{k}H_{i}^{'}
\left.\int\limits_{a}^{t}f_{iy}[\hat{x}]^{\Delta}(\tau)\Delta \tau h(t)\right|^{b}_{a}
+\sum\limits_{i=k+1}^{k+n}H_{i}^{'}
\left.\int\limits_{a}^{t}f_{iy}[\hat{x}]^{\nabla}(\tau)\nabla \tau h(t)\right|^{b}_{a}
+\left.h(t)\cdot c\right|_{a}^{b} = 0.
$$
Next, we deduce that
\begin{multline}
\label{glowne z nabla}
h(b)
\left[\sum\limits_{i=1}^{k}H_{i}^{'}
\int\limits_{a}^{b}f_{iy}[\hat{x}]^{\Delta}(\tau)\Delta\tau
+\sum\limits_{i=k+1}^{k+n}H_{i}^{'}
\int\limits_{a}^{b}f_{iy}[\hat{x}]^{\nabla}(\tau)\nabla \tau+c\right]\\
-h(a)
\left[\sum\limits_{i=1}^{k}H_{i}^{'}
\int\limits_{a}^{a}f_{iy}[\hat{x}]^{\Delta}(\tau)\Delta\tau
+\sum\limits_{i=k+1}^{k+n}H_{i}^{'}
\int\limits_{a}^{a}f_{iy}[\hat{x}]^{\nabla}(\tau)\nabla \tau+c\right] = 0,
\end{multline}
where
\begin{equation}
\label{E-L skrot nabla}
c=\xi(\rho(t))+\chi(t).
\end{equation}
The Euler--Lagrange equation \eqref{main_dla_nabla}
of Theorem~\ref{main} (or equation \eqref{E-L skrot nabla})
is given at $t=\sigma(a)$ as
\begin{multline*}
\sum\limits_{i=1}^{k}H_{i}^{'}
\left(f_{iv}[\hat{x}]^{\Delta}(\rho(\sigma(a)))
-\int\limits_{a}^{\rho(\sigma(a))}f_{iy}[\hat{x}]^{\Delta}(\tau)\Delta \tau\right)\\
+\sum\limits_{i=k+1}^{k+n}H_{i}^{'}
\left(f_{iv}[\hat{x}]^{\nabla}(\sigma(a))-\int\limits_{a}^{\sigma(a)}
f_{iy}[\hat{x}]^{\nabla}(\tau)\nabla \tau\right) = c.
\end{multline*}
We conclude that
$$
\sum\limits_{i=1}^{k}H_{i}^{'}\cdot
f_{iv}[\hat{x}]^{\Delta}(a)
+\sum\limits_{i=k+1}^{k+n} H_{i}^{'}\cdot
\left(f_{iv}[\hat{x}]^{\nabla}(\sigma(a)) - \int\limits_{a}^{\sigma(a)}
f_{iy}[\hat{x}]^{\nabla}(\tau)\nabla \tau\right) = c.
$$
Restricting the variations $h$ to those such that $h(b)=0$,
it follows from \eqref{glowne z nabla} that $h(a)\cdot c=0$.
From the arbitrariness of $h$, we conclude that $c = 0$.
Hence, we obtain \eqref{pocz dla nabla}.
\end{proof}

\begin{theorem}[Transversality condition at the terminal time $t = b$]
\label{punkt kon}
Let $\mathbb{T}$ be a time scale for which $\sigma(\rho(b))=b$.
If $\hat{x}$ is a weak local solution to \eqref{problem}
with $x(b)$ not specified, then
\begin{equation}
\label{kon dla delta}
\sum\limits_{i=1}^{k}H_{i}^{'}
\left(
f_{iv}[\hat{x}]^{\Delta}(\rho(b))
+
\int\limits_{\rho(b)}^{b}f_{iy}[\hat{x}]^{\Delta}(t)\Delta t
\right)
+\sum\limits_{i=k+1}^{k+n}H_{i}^{'}
\cdot
f_{iv}[\hat{x}]^{\nabla}(b) = 0
\end{equation}
holds together with the Euler--Lagrange equations
\eqref{main_dla_nabla} and \eqref{main_dla_delta}.
\end{theorem}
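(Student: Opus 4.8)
The plan is to follow the blueprint of Theorem~\ref{punkt pocz}, but to run it through the \emph{delta} form of the Euler--Lagrange equation, that is \eqref{main_dla_delta} (equivalently $\xi(t)+\chi^{\sigma}(t)=c$, with $\xi$, $\chi$ as in \eqref{eq:st}, \eqref{eq:rt}), rather than the nabla form \eqref{main_dla_nabla}. First I would return to the general variational identity \eqref{glowne}, this time keeping the boundary terms since $x(b)$ is free, and abbreviate the two boundary contributions by writing $\Xi(t):=\sum_{i=1}^{k}H_{i}^{'}\int_{a}^{t}f_{iy}[\hat{x}]^{\Delta}(\tau)\Delta\tau+\sum_{i=k+1}^{k+n}H_{i}^{'}\int_{a}^{t}f_{iy}[\hat{x}]^{\nabla}(\tau)\nabla\tau$, so that \eqref{glowne} reads $\left.\Xi(t)h(t)\right|_{a}^{b}+\int_{a}^{b}h^{\Delta}(t)\xi(t)\Delta t+\int_{a}^{b}h^{\nabla}(t)\chi(t)\nabla t=0$.

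Next I would convert the nabla integral $\int_{a}^{b}h^{\nabla}(t)\chi(t)\nabla t$ into $\int_{a}^{b}h^{\Delta}(t)\chi^{\sigma}(t)\Delta t$ exactly as in part~(ii) of the proof of Theorem~\ref{main}, using \eqref{polaczenie calka nabla} followed by \eqref{polaczenie rozniczka delta}. The identity $\xi(t)+\chi^{\sigma}(t)=c$ then collapses the combined delta integral to $c\int_{a}^{b}h^{\Delta}(t)\Delta t=c\,(h(b)-h(a))$, and since $\Xi(a)=0$ the whole of \eqref{glowne} reduces to $h(b)\,[\Xi(b)+c]-h(a)\,c=0$. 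Restricting to variations with $h(a)=0$ and using the arbitrariness of $h(b)$ yields $c=-\Xi(b)$.

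Finally, I would evaluate the delta Euler--Lagrange equation \eqref{main_dla_delta} at $t=\rho(b)$. This is the step where the hypothesis $\sigma(\rho(b))=b$ is essential: it turns $\chi^{\sigma}(\rho(b))=\chi(\sigma(\rho(b)))$ into $\chi(b)=\sum_{i=k+1}^{k+n}H_{i}^{'}\bigl(f_{iv}[\hat{x}]^{\nabla}(b)-\int_{a}^{b}f_{iy}[\hat{x}]^{\nabla}(\tau)\nabla\tau\bigr)$, so that \eqref{main_dla_delta} at $\rho(b)$ reads $\xi(\rho(b))+\chi(b)=c$. Substituting $c=-\Xi(b)$, the integrals $\int_{a}^{b}f_{iy}[\hat{x}]^{\nabla}(\tau)\nabla\tau$ cancel against the corresponding terms of $\Xi(b)$, while the $\Delta$-integrals combine through $-\int_{a}^{\rho(b)}+\int_{a}^{b}=\int_{\rho(b)}^{b}$, and one is left with precisely \eqref{kon dla delta}; the accompanying Euler--Lagrange equations \eqref{main_dla_nabla} and \eqref{main_dla_delta} hold by Theorem~\ref{main}. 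The only thing requiring care is the bookkeeping of the several boundary terms, and in particular evaluating the Euler--Lagrange equation at $\rho(b)$ rather than at $b$; every other manipulation is a transcription of computations already performed for Theorems~\ref{main} and \ref{punkt pocz}.
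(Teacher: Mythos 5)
Your proposal is correct and follows essentially the same route as the paper: extract the boundary term from \eqref{glowne}, restrict to variations with $h(a)=0$, use the arbitrariness of $h(b)$ to identify the constant $c$, and evaluate \eqref{main_dla_delta} at $t=\rho(b)$, where the hypothesis $\sigma(\rho(b))=b$ turns $\chi^{\sigma}(\rho(b))$ into $\chi(b)$. The only (cosmetic) difference is that you collapse the interior integrals via the delta identity $\xi(t)+\chi^{\sigma}(t)=c$, whereas the paper reuses the nabla-form computation \eqref{glowne z nabla} from the proof of Theorem~\ref{punkt pocz} before switching to the delta Euler--Lagrange equation; your version is, if anything, a bit more careful that the constant appearing in the boundary identity is the same one appearing in \eqref{main_dla_delta}.
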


\begin{proof}
The calculations in the proof of Theorem~\ref{punkt pocz} give us
\eqref{glowne z nabla}. When $h(a)=0$,
the Euler--Lagrange equation \eqref{main_dla_delta}
of Theorem~\ref{main} has the following form at $t=\rho(b)$:
\begin{multline*}
\sum\limits_{i=1}^{k}H_{i}^{'}
\left(
f_{iv}[\hat{x}]^{\Delta}(\rho(b))-\int\limits_{a}^{\rho(b)}
f_{iy}[\hat{x}]^{\Delta}(\tau)\Delta \tau\right)\\
+\sum\limits_{i=k+1}^{k+n}H_{i}^{'}
\left(f_{iv}[\hat{x}]^{\nabla}(\sigma(\rho(b)))-\int\limits_{a}^{\sigma(\rho(b))}
f_{iy}[\hat{x}]^{\nabla}(t)\nabla\tau\right) = c.
\end{multline*}
Then,
\begin{multline}
\label{E-L trans}
\sum\limits_{i=1}^{k}H_{i}^{'}
\left(
f_{iv}[\hat{x}]^{\Delta}(\rho(b))-\int\limits_{a}^{\rho(b)}
f_{iy}[\hat{x}]^{\Delta}(\tau)\Delta \tau\right)\\
+\sum\limits_{i=k+1}^{k+n}H_{i}^{'}
\left(f_{iv}[\hat{x}]^{\nabla}(b)-\int\limits_{a}^{b}
f_{iy}[\hat{x}]^{\nabla}(t)\nabla\tau\right) = c.
\end{multline}
We obtain \eqref{kon dla delta} from
\eqref{glowne z nabla} and \eqref{E-L trans}.
\end{proof}

Several new interesting results can be immediately obtained
from Theorems~\ref{main}, \ref{punkt pocz} and \ref{punkt kon}.
An example of such results is given by Corollary~\ref{cor iloraz}.

\begin{corollary}
\label{cor iloraz}
If $\hat{x}$ is a solution to the problem
\begin{gather*}
\mathcal{L}(x)
=\frac{\int\limits_{a}^{b}f_{1}(t,x^{\sigma}(t),x^{\Delta}(t))
\Delta t}{\int\limits_{a}^{b}f_{2}(t,x^{\rho}(t),x^{\nabla}(t))\nabla t}
\longrightarrow \textrm{extr},\\
(x(a)=x_{a}), \quad (x(b)=x_{b}),
\end{gather*}
then the Euler--Lagrange equations
$$
\frac{1}{\mathcal{F}_{2}}
\left(
f_{1v}[\hat{x}]^{\Delta}(\rho(t))-\int\limits_{a}^{\rho(t)}
f_{1y}[\hat{x}]^{\Delta}(\tau)\Delta \tau\right)
- \frac
{\mathcal{F}_{1}}
{\mathcal{F}_{2}^{2}}
\left(f_{2v}[\hat{x}]^{\nabla}(t)-\int\limits_{a}^{t}
f_{2y}[\hat{x}]^{\nabla}(\tau)\nabla \tau\right) = c
$$
and
$$
\frac{1}{\mathcal{F}_{2}}
\left(
f_{1v}[\hat{x}]^{\Delta}(t)-\int\limits_{a}^{t}
f_{1y}[\hat{x}]^{\Delta}(\tau)\Delta \tau\right)
-\frac{\mathcal{F}_{1}}{\mathcal{F}_{2}^{2}}
\left(f_{2v}[\hat{x}]^{\nabla}(\sigma(t))
-\int\limits^{\sigma (t)}_{a}
f_{2y}[\hat{x}]^{\nabla}(\tau)\nabla \tau\right)=c
$$
hold for all $t\in [a,b]_{\kappa}^{\kappa}$, where
$$
\mathcal{F}_{1}:={\int\limits_{a}^{b}
f_{1}(t,\hat{x}^{\sigma}(t),\hat{x}^{\Delta}(t))\Delta t}
\quad \text{ and } \quad
\mathcal{F}_{2}:={\int\limits_{a}^{b}
f_{2}(t,\hat{x}^{\rho}(t),\hat{x}^{\nabla}(t))\nabla t}.
$$
Moreover, if $x(a)$ is free and $\rho(\sigma(a))=a$, then
$$
\frac{1}{\mathcal{F}_{2}}
f_{1v}[\hat{x}]^{\Delta}(a)
-\frac{\mathcal{F}_{1}}{\mathcal{F}_{2}^{2}}
\left(f_{2v}[\hat{x}]^{\nabla}(\sigma(a))-\int\limits_{a}^{\sigma(a)}
f_{2y}[\hat{x}]^{\nabla}(t)\nabla t\right) =0;
$$
if $x(b)$ is free and $\sigma(\rho(b))=b$, then
$$
\frac{1}{\mathcal{F}_{2}}
\left(f_{1v}[\hat{x}]^{\Delta}(\rho(b))+\int\limits^{b}_{\rho(b)}
f_{1y}[\hat{x}]^{\Delta}(t)\Delta t\right)
-\frac{\mathcal{F}_{1}}{\mathcal{F}_{2}^{2}}
f_{2v}[\hat{x}]^{\nabla}(b)=0.
$$
\end{corollary}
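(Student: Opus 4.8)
The plan is to recognize Corollary~\ref{cor iloraz} as the special case of Theorems~\ref{main}, \ref{punkt pocz}, and \ref{punkt kon} obtained by taking $k=1$, $n=1$, and $H(z_1,z_2)=z_1/z_2$. First I would record the precise dictionary: with these choices, $\mathcal{F}_1(\hat{x})=\int_a^b f_1[\hat{x}]^\Delta(t)\,\Delta t$ and $\mathcal{F}_2(\hat{x})=\int_a^b f_2[\hat{x}]^\nabla(t)\,\nabla t$ coincide with the $\mathcal{F}_1,\mathcal{F}_2$ defined in the statement, and the two partial derivatives of $H$ are $H_1'(z_1,z_2)=1/z_2$ and $H_2'(z_1,z_2)=-z_1/z_2^2$. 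Evaluated at $(\mathcal{F}_1,\mathcal{F}_2)$ these become $H_1'=1/\mathcal{F}_2$ and $H_2'=-\mathcal{F}_1/\mathcal{F}_2^2$, exactly the coefficients appearing in the corollary. One should note in passing that the hypotheses of the theorems are met: $H(z_1,z_2)=z_1/z_2$ has continuous partials on the relevant domain provided $\mathcal{F}_2\neq 0$ along the extremizer, which is implicitly assumed since the functional $\mathcal{L}$ must be defined near $\hat{x}$.

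Next I would simply substitute. Plugging $k=1$, $n=1$ into the Euler--Lagrange equation \eqref{main_dla_nabla}, the first sum has the single term $H_1'\bigl(f_{1v}[\hat{x}]^\Delta(\rho(t))-\int_a^{\rho(t)}f_{1y}[\hat{x}]^\Delta(\tau)\,\Delta\tau\bigr)$ and the second sum has the single term $H_2'\bigl(f_{2v}[\hat{x}]^\nabla(t)-\int_a^t f_{2y}[\hat{x}]^\nabla(\tau)\,\nabla\tau\bigr)$; inserting the values of $H_1',H_2'$ gives the first displayed Euler--Lagrange equation of the corollary verbatim. The same substitution in \eqref{main_dla_delta} produces the second displayed equation. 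The stated domain $t\in[a,b]_\kappa^\kappa$ is the intersection of the domains $\mathbb{T}_\kappa$ and $\mathbb{T}^\kappa$ on which \eqref{main_dla_nabla} and \eqref{main_dla_delta} respectively hold, consistent with the $k\neq 0$, $n\neq 0$ case of the definition of $C_{k,n}^1$.

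For the boundary terms, if $x(a)$ is free I would invoke Theorem~\ref{punkt pocz}, whose hypothesis $\rho(\sigma(a))=a$ is exactly the one assumed; writing out \eqref{pocz dla nabla} with $k=n=1$ and the above values of $H_1',H_2'$ yields the third displayed equation of the corollary. Likewise, if $x(b)$ is free, Theorem~\ref{punkt kon} (whose hypothesis $\sigma(\rho(b))=b$ is the one assumed) specializes via \eqref{kon dla delta} to the last displayed equation. Since the specialization is entirely mechanical, there is no genuine obstacle; the only point requiring a word of care is the standing assumption $\mathcal{F}_2\neq 0$ needed to make $H$ and its derivatives well defined and continuous near $\hat{x}$, so that Theorems~\ref{main}, \ref{punkt pocz}, \ref{punkt kon} apply. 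I would state this caveat explicitly and then conclude that all four displayed conditions follow at once.
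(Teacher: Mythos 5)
Your proposal is correct and follows exactly the route the paper intends: the paper gives no separate proof of Corollary~\ref{cor iloraz}, presenting it as an immediate specialization of Theorems~\ref{main}, \ref{punkt pocz} and \ref{punkt kon} with $k=n=1$ and $H(z_1,z_2)=z_1/z_2$, so that $H_1'=1/\mathcal{F}_2$ and $H_2'=-\mathcal{F}_1/\mathcal{F}_2^2$. Your explicit remark that $\mathcal{F}_2\neq 0$ is needed for $H$ to satisfy the standing smoothness hypotheses is a worthwhile addition that the paper leaves implicit.
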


% -------------------------------------

\subsection{Isoperimetric problems}
\label{sub:sec:iso:p}

Let us consider the general composition
isoperimetric problem on time scales
subject to given boundary conditions.
The problem consists of minimizing or maximizing
\begin{multline}
\label{problem iso}
\mathcal L(x)=H\left(\int\limits_{a}^{b}f_{1}(t,x^{\sigma}(t),x^{\Delta}(t))\Delta t,
\ldots,\int\limits_{a}^{b}f_{k}(t,x^{\sigma}(t),x^{\Delta}(t))\Delta t,\right.\\
\left.\int\limits_{a}^{b}f_{k+1}(t,x^{\rho}(t),x^{\nabla}(t))\nabla t,\ldots,
\int\limits_{a}^{b}f_{k+n}(t,x^{\rho}(t),x^{\nabla}(t)) \nabla t \right)
\end{multline}
in the class of functions $x\in C_{k,n}^{1}$ satisfying the boundary conditions
\begin{equation}
\label{warunki iso}
x(a)=x_{a},\quad x(b)=x_{b},
\end{equation}
and the generalized isoperimetric constraint
\begin{multline}
\label{constraint}
\mathcal{K}(x)
=P\left(\int\limits_{a}^{b}g_{1}(t,x^{\sigma}(t),x^{\Delta}(t))\Delta t,
\ldots,\int\limits_{a}^{b}g_{m}(t,x^{\sigma}(t),x^{\Delta}(t))\Delta t,\right.\\
\left.\int\limits_{a}^{b}g_{m+1}(t,x^{\rho}(t),x^{\nabla}(t))\nabla t,\ldots,
\int\limits_{a}^{b}g_{m+p}(t,x^{\rho}(t),x^{\nabla}(t)) \nabla t \right)=d,
\end{multline}
where $x_{a},x_{b},d\in\mathbb{R}$. We assume that:
\begin{enumerate}

\item
the functions $H:\mathbb{R}^{n+k}\rightarrow\mathbb{R}$
and $P:\mathbb{R}^{m+p}\rightarrow\mathbb{R}$
have continuous partial derivatives with respect to all their arguments,
which we denote by $H_{i}^{'}$, $i=1,\ldots,n+k$,
and $P_{i}^{'}$, $i=1,\ldots,m+p$;

\item
functions $(t,y,v)\rightarrow f_{i}(t,y,v)$,
$i=1,\ldots, n+k$, and
$(t,y,v)\rightarrow g_{j}(t,y,v)$, $j=1,\ldots,m+p$,
from $[a,b]\times\mathbb{R}^{2}$ to $\mathbb{R}$,
have partial continuous derivatives with respect to $y$ and $v$ for all
$t\in [a,b]$, which we denote by $f_{iy}$, $f_{iv}$, and $g_{jy}, g_{jv}$;

\item for all $x\in C_{k+m,n+p}^{1}$,
$f_{i}$, $f_{iy}$, $f_{iv}$ and $g_{j},g_{jy}$, $g_{jv}$
are continuous in $t\in [a,b]^{\kappa}$,
$i=1,\ldots,k$, $j=1,\ldots,m$,
and continuous in $t\in [a,b]_{\kappa}$,
$i=k+1,\ldots,k+n$, $j=m+1,\ldots,m+p$.
\end{enumerate}

\begin{definition}
We say that an admissible function $\hat{x}$ is a weak local minimizer
(respectively a weak local maximizer) to the isoperimetric problem
\eqref{problem iso}--\eqref{constraint}, if there exists a $\delta >0$
such that $\mathcal{L}(\hat{x})\leqslant \mathcal{L}(x)$
(respectively $\mathcal{L}(\hat{x})\geqslant \mathcal{L}(x)$)
for all admissible functions $x\in C_{k+m,n+p}^{1}$
satisfying the boundary conditions \eqref{warunki iso},
the isoperimetric constraint \eqref{constraint},
and inequality $||x-\hat{x}||_{1,\infty}<\delta$.
\end{definition}

Let us define $u$ and $w$ by
\begin{equation}
\label{eq:def:ut}
u(t):=
\sum\limits_{i=1}^{m}P_{i}^{'}
\left(g_{iv}[\hat{x}]^{\Delta}(t)-\int\limits_{a}^{t}
g_{iy}[\hat{x}]^{\Delta}(\tau)\Delta \tau \right)
\end{equation}
and
\begin{equation}
\label{eq:def:wt}
w(t):=
\sum\limits_{i=m+1}^{m+p}P_{i}^{'}
\left(g_{iv}[\hat{x}]^{\nabla}(t)-\int\limits_{a}^{t}
g_{iy}[\hat{x}]^{\nabla}(\tau)\nabla \tau \right),
\end{equation}
where we omit, for brevity, the argument of $P_{i}^{'}$:
$P_{i}^{'}:=P_{i}^{'}(\mathcal{G}_{1}(\hat{x}),
\ldots,\mathcal{G}_{m+p}(\hat{x}))$
with $\mathcal{G}_{i}(\hat{x})=\int\limits_{a}^{b}
g_{i}(t,\hat{x}^{\sigma}(t),\hat{x}^{\Delta}(t))\Delta t$,
$i=1,\ldots,m$, and
$\mathcal{G}_{i}(\hat{x})=\int\limits_{a}^{b}
g_{i}(t,\hat{x}^{\rho}(t),\hat{x}^{\nabla}(t))\nabla t$,
$i=m+1,\ldots,m+p$.

\begin{definition}
An admissible function $\hat{x}$ is said to be an extremal for $\mathcal{K}$ if
$u(t) + w(\sigma(t)) = const$ and $u(\rho(t)) + w(t) = const$ for all $t\in[a,b]_\kappa^\kappa$.
An extremizer (i.e., a weak local minimizer or a weak local maximizer)
to problem \eqref{problem iso}--\eqref{constraint} that is not an extremal for $\mathcal{K}$
is said to be a normal extremizer; otherwise (i.e., if it is an extremal for $\mathcal{K}$),
the extremizer is said to be abnormal.
\end{definition}

\begin{theorem}[Optimality condition to the
isoperimetric problem \eqref{problem iso}--\eqref{constraint}]
\label{twierdzenie iso}
Let $\xi$ and $\chi$ be given as in \eqref{eq:st}
and \eqref{eq:rt}, and $u$ and $w$ be given as in
\eqref{eq:def:ut} and \eqref{eq:def:wt}.
If $\hat{x}$ is a normal extremizer to the isoperimetric problem
\eqref{problem iso}--\eqref{constraint}, then there exists
a real number $\lambda$ such that
\begin{enumerate}
\item $\xi^{\rho}(t)+\chi(t)-\lambda\left(u^{\rho}(t)+w(t)\right)= const$;
\item $\xi(t)+\chi^{\sigma}(t)-\lambda\left(u^{\rho}(t)+w(t)\right)= const$;
\item $\xi^{\rho}(t)+\chi(t)-\lambda\left(u(t)+w^{\sigma}(t)\right)= const$;
\item $\xi(t)+\chi^{\sigma}(t)-\lambda\left(u(t)+w^{\sigma}(t)\right)= const$;
\end{enumerate}
for all $t\in [a,b]^{\kappa}_{\kappa}$.
\end{theorem}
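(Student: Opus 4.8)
The plan is to adapt the classical Lagrange multiplier technique to the delta-nabla setting, reducing the isoperimetric problem to the unconstrained situation already handled in Theorem~\ref{main}. The key observation is that a normal extremizer $\hat{x}$ should be, for a suitable $\lambda\in\mathbb{R}$, an extremal of the \emph{augmented composition functional} $\mathcal{L}-\lambda\mathcal{K}$, which is again of the form \eqref{problem}: its delta-integrands are $f_{1},\ldots,f_{k},g_{1},\ldots,g_{m}$, its nabla-integrands are $f_{k+1},\ldots,f_{k+n},g_{m+1},\ldots,g_{m+p}$, and its outer function is $\widetilde{H}(z,\zeta)=H(z)-\lambda P(\zeta)$. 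The four stated identities are then the delta-nabla Euler--Lagrange equations of this functional, read off exactly as \eqref{main_dla_nabla} and \eqref{main_dla_delta} were in the proof of Theorem~\ref{main}, together with their two ``mixed'' counterparts.

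First I would take variations $h_{1},h_{2}\in C_{k+m,n+p}^{1}$ vanishing at $t=a$ and $t=b$ (only such directions are admissible, since \eqref{warunki iso} is prescribed) and set $\phi(\varepsilon_{1},\varepsilon_{2})=\mathcal{L}(\hat{x}+\varepsilon_{1}h_{1}+\varepsilon_{2}h_{2})$ and $\psi(\varepsilon_{1},\varepsilon_{2})=\mathcal{K}(\hat{x}+\varepsilon_{1}h_{1}+\varepsilon_{2}h_{2})-d$. Differentiating under the integral sign, using the chain rule, and integrating by parts the $g_{jy}$ terms exactly as in the proof of Theorem~\ref{main} gives $\partial\psi/\partial\varepsilon_{2}(0,0)=\int_{a}^{b}h_{2}^{\Delta}u\,\Delta t+\int_{a}^{b}h_{2}^{\nabla}w\,\nabla t$, which by Theorems~\ref{polaczenie rozniczka delta i nabla} and~\ref{polaczenie calka delta i nabla} equals both $\int_{a}^{b}h_{2}^{\nabla}(u^{\rho}+w)\,\nabla t$ and $\int_{a}^{b}h_{2}^{\Delta}(u+w^{\sigma})\,\Delta t$. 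Since $\hat{x}$ is not an extremal for $\mathcal{K}$, at least one of $u^{\rho}+w$, $u+w^{\sigma}$ is non-constant, so by the contrapositive of Lemma~\ref{Dubois-Reymond nabla} or Lemma~\ref{Dubois-Reymond delta} there is an admissible $h_{2}$ with $\partial\psi/\partial\varepsilon_{2}(0,0)\neq0$. As $\psi(0,0)=0$, the implicit function theorem produces a $C^{1}$ map $\varepsilon_{1}\mapsto\varepsilon_{2}(\varepsilon_{1})$ with $\varepsilon_{2}(0)=0$ and $\psi(\varepsilon_{1},\varepsilon_{2}(\varepsilon_{1}))\equiv0$, so $\varepsilon_{1}\mapsto\phi(\varepsilon_{1},\varepsilon_{2}(\varepsilon_{1}))$ has an extremum at $0$; differentiating it and the constraint identity, eliminating $\varepsilon_{2}'(0)$, and putting $\lambda:=\frac{\partial\phi/\partial\varepsilon_{2}(0,0)}{\partial\psi/\partial\varepsilon_{2}(0,0)}$ yields $\frac{\partial\phi}{\partial\varepsilon_{1}}(0,0)-\lambda\frac{\partial\psi}{\partial\varepsilon_{1}}(0,0)=0$ for every admissible $h_{1}$.

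Expanding this by the chain rule and integrating by parts the $f_{iy}$ and $g_{jy}$ terms as before, with $\xi,\chi,u,w$ as in \eqref{eq:st}, \eqref{eq:rt}, \eqref{eq:def:ut}, \eqref{eq:def:wt}, the relation becomes $\int_{a}^{b}h_{1}^{\Delta}(t)(\xi(t)-\lambda u(t))\,\Delta t+\int_{a}^{b}h_{1}^{\nabla}(t)(\chi(t)-\lambda w(t))\,\nabla t=0$ for all admissible $h_{1}$. This is exactly \eqref{skrot} with $\xi$ and $\chi$ replaced by $\xi-\lambda u$ and $\chi-\lambda w$, so the two manipulations in the proof of Theorem~\ref{main} apply directly: converting the delta integral to a nabla integral (Theorems~\ref{polaczenie calka delta i nabla} and~\ref{polaczenie rozniczka delta i nabla}) and applying Lemma~\ref{Dubois-Reymond nabla} gives the first identity, and converting the nabla integral to a delta integral and applying Lemma~\ref{Dubois-Reymond delta} gives the fourth. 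The second and third are obtained analogously, by first rewriting the $\mathcal{L}$-part $\int h_{1}^{\Delta}\xi\,\Delta t+\int h_{1}^{\nabla}\chi\,\nabla t$ and the $\mathcal{K}$-part $\int h_{1}^{\Delta}u\,\Delta t+\int h_{1}^{\nabla}w\,\nabla t$ in opposite (delta versus nabla) forms before reducing to a single measure and invoking the appropriate Dubois--Reymond lemma.

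I expect the main obstacles to be, first, confirming that normality of $\hat{x}$ really supplies an admissible $h_{2}$ with non-vanishing $\partial\psi/\partial\varepsilon_{2}(0,0)$ (which is exactly why both representations of that quantity and both Dubois--Reymond lemmas are needed, along with enough regularity in $C_{k+m,n+p}^{1}$ to apply the implicit function theorem), and, second, the careful bookkeeping in the last step: one must track which of Theorems~\ref{polaczenie rozniczka delta i nabla} and~\ref{polaczenie calka delta i nabla} is applied to which integral, and on which of $\mathbb{T}_{\kappa}$, $\mathbb{T}^{\kappa}$, $[a,b]^{\kappa}_{\kappa}$ the resulting equalities hold, so that all four identities emerge in precisely the stated form --- which, as Example~\ref{ex:1} illustrates, genuinely differ on an irregular time scale.
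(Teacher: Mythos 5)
Your overall strategy coincides with the paper's: a two-parameter variation $\hat{x}+\varepsilon_{1}h_{1}+\varepsilon_{2}h_{2}$, the use of normality together with a Dubois--Reymond lemma to produce an $h_{2}$ along which the constraint derivative does not vanish, the implicit function theorem and the Lagrange multiplier rule, and finally the reduction of
$\int_{a}^{b}h_{1}^{\Delta}(t)\left(\xi(t)-\lambda u(t)\right)\Delta t+\int_{a}^{b}h_{1}^{\nabla}(t)\left(\chi(t)-\lambda w(t)\right)\nabla t=0$
to the argument of Theorem~\ref{main}. The paper proves only item~1 in detail and declares the others ``similar''; your derivation of items~1 and~4 is precisely that argument, and you are in fact more careful than the paper on one point: normality only guarantees that at least one of $u^{\rho}+w$ and $u+w^{\sigma}$ is non-constant, and you correctly allow either representation of the constraint derivative, whereas the paper's proof relies on the nabla representation alone.

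However, the mechanism you sketch for items~2 and~3 does not produce the stated conditions. Writing the $\mathcal{L}$-part as $\int_{a}^{b}h_{1}^{\Delta}(\xi+\chi^{\sigma})\Delta t$ and the $\mathcal{K}$-part as $\int_{a}^{b}h_{1}^{\nabla}(u^{\rho}+w)\nabla t$ leaves an identity between a delta integral and a nabla integral; to ``reduce to a single measure'' you must apply \eqref{polaczenie calka delta} or \eqref{polaczenie calka nabla} to one of them, and this composes its integrand with $\rho$ or $\sigma$. You therefore obtain either
$\xi(t)+\chi^{\sigma}(t)-\lambda\left(u(\rho(\sigma(t)))+w^{\sigma}(t)\right)=const$
or
$\xi^{\rho}(t)+\chi(\sigma(\rho(t)))-\lambda\left(u^{\rho}(t)+w(t)\right)=const$,
and on an irregular time scale, where $\rho\circ\sigma$ and $\sigma\circ\rho$ differ from the identity, neither of these is item~2; the same obstruction blocks item~3. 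Note that all the information extracted from the multiplier rule is the single relation displayed above, and its two admissible Dubois--Reymond reductions are exactly items~1 and~4, so the mixed items~2 and~3 require an additional idea. To be fair, the paper supplies none either --- its written proof stops at item~1 --- but your proposal should not assert that the mixed items follow ``analogously'' when the analogous computation visibly yields a different conclusion.
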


\begin{proof}
We prove the first item of Theorem~\ref{twierdzenie iso}.
The other items are proved in a similar way.
Consider a variation of $\hat{x}$ such that
$\overline{x}=\hat{x}+\varepsilon_{1}h_{1}+\varepsilon_{2}h_{2}$,
where $h_{i}\in C^{1}_{k+m,n+p}$ and $h_{i}(a)=h_{i}(b)=0$, $i=1,2$,
and parameters $\varepsilon_{1}$ and $\varepsilon_{2}$ are such that
$||\overline{x}-\hat{x}||_{1,\infty}<\delta$ for some $\delta>0$.
Function $h_{1}$ is arbitrary and $h_{2}$
will be chosen later. Define
\begin{multline*}
\mathcal{\overline{K}}(\varepsilon_{1},\varepsilon_{2})=\mathcal{K}(\overline{x})
= P\left(\int\limits_{a}^{b}
g_{1}(t,\overline{x}^{\sigma}(t),\overline{x}^{\Delta}(t))\Delta t,
\ldots,\int\limits_{a}^{b}
g_{m}(t,\overline{x}^{\sigma}(t),\overline{x}^{\Delta}(t))\Delta t,\right.\\
\left.\int\limits_{a}^{b}
g_{m+1}(t,\overline{x}^{\rho}(t),\overline{x}^{\nabla}(t))\nabla t,\ldots,
\int\limits_{a}^{b}
g_{m+p}(t,\overline{x}^{\rho}(t),\overline{x}^{\nabla}(t))\nabla t\right) - d.
\end{multline*}
A direct calculation gives
\begin{multline*}
\left.\frac{\partial\mathcal{\overline{K}}}{\partial\varepsilon_{2}}\right|_{(0,0)}
=\sum\limits_{i=1}^{m}P_{i}^{'}\int\limits_{a}^{b}
\left(g_{iy}[\hat{x}]^{\Delta}(t)h^{\sigma}_{2}(t)
+g_{iv}[\hat{x}]^{\Delta}(t)h^{\Delta}_{2}(t)\right)\Delta t\\
+\sum\limits_{i=m+1}^{m+p}P_{i}^{'}\int\limits_{a}^{b}
\left(g_{iy}[\hat{x}]^{\nabla}(t)h^{\rho}_{2}(t)
+g_{iv}[\hat{x}]^{\nabla}(t)h^{\nabla}_{2}(t)\right)\nabla t.
\end{multline*}
Integration by parts of the first terms of both integrals gives:
\begin{multline*}
\sum\limits_{i=1}^{m}P_{i}^{'}
\left[\left.\int\limits_{a}^{t}g_{iy}[\hat{x}]^{\Delta}(\tau)\Delta\tau h_{2}(t)\right|^{b}_{a}
-\int\limits_{a}^{b}\left(\int\limits_{a}^{t} g_{iy}[\hat{x}]^{\Delta}(\tau)\Delta \tau\right)
h_{2}^{\Delta}(t)\Delta t\right.\\
+\left. \int\limits_{a}^{b}g_{iv}[\hat{x}]^{\Delta}(t)h_{2}^{\Delta}(t)\Delta t\right]\\
+\sum\limits_{i=m+1}^{m+p}P_{i}^{'} \left[\left.\int\limits_{a}^{t}
g_{iy}[\hat{x}]^{\nabla}(\tau)\nabla\tau h_{2}(t)\right|^{b}_{a}
-\int\limits_{a}^{b}\left(\int\limits_{a}^{t}
g_{iy}[\hat{x}]^{\nabla}(\tau)\nabla \tau\right) h_{2}^{\nabla}(t)\nabla t\right.\\
+ \left. \int\limits_{a}^{b} g_{iv}[\hat{x}]^{\nabla}(t)h_{2}^{\nabla}(t)\nabla t\right].
\end{multline*}
Since $h_{2}(a)=h_{2}(b)=0$, then
\begin{multline*}
\int\limits_{a}^{b}
\sum\limits_{i=1}^{m}P_{i}^{'}h_{2}^{\Delta}(t)
\left(g_{iv}[\hat{x}]^{\Delta}(t)-\int\limits_{a}^{t}
g_{iy}[\hat{x}]^{\Delta}(\tau)\Delta \tau \right)\Delta t\\
+\int\limits_{a}^{b}
\sum\limits_{i=m+1}^{m+p}P_{i}^{'}h_{2}^{\nabla}(t)
\left(g_{iv}[\hat{x}]^{\nabla}(t)-\int\limits_{a}^{t}
g_{iy}[\hat{x}]^{\nabla}(\tau)\nabla \tau \right)\nabla t.
\end{multline*}
Therefore,
$$
\left.\frac{\partial\mathcal{\overline{K}}}{\partial\varepsilon_{2}}\right|_{(0,0)}
= \int\limits_{a}^{b}h_{2}^{\Delta}(t)u(t)\Delta t
+\int\limits_{a}^{b}h_{2}^{\nabla}(t)w(t)\nabla t.
$$
Using relation \eqref{polaczenie rozniczka nabla}
of Theorem~\ref{polaczenie rozniczka delta i nabla},
we obtain that
\begin{equation*}
\int\limits_{a}^{b}\left(h_{2}^{\Delta}\right)^{\rho}(t)u^{\rho}(t)\nabla t
+\int\limits_{a}^{b}h_{2}^{\nabla}(t) w(t)\nabla t
=\int\limits_{a}^{b}h_{2}^{\nabla}(t)\left(u^{\rho}(t)+w(t)\right)\nabla t.
\end{equation*}
By the Dubois--Reymond Lemma~\ref{Dubois-Reymond nabla},
there exists a function $h_{2}$ such that
$\left.\frac{\partial\mathcal{\overline{K}}}{\partial\varepsilon_{2}}\right|_{(0,0)}\neq 0$.
Since $\mathcal{\overline{K}}(0,0)=0$, there exists a function $\varepsilon_{2}$,
defined in the neighborhood of zero, such that
$\mathcal{\overline{K}}(\varepsilon_{1}, \varepsilon_{2}(\varepsilon_{1}))=0$,
i.e., we may choose a subset of variations $\hat{x}$ satisfying the isoperimetric constraint.
Let us consider the real function
\begin{multline*}
\mathcal{\overline{L}}(\varepsilon_{1},\varepsilon_{2})=\mathcal{L}(\overline{x})
=H\left(\int\limits_{a}^{b}f_{1}(t,\overline{x}^{\sigma}(t),\overline{x}^{\Delta}(t))\Delta t,
\ldots,\int\limits_{a}^{b}f_{k}(t,\overline{x}^{\sigma}(t),\overline{x}^{\Delta}(t))\Delta t,\right.\\
\left.\int\limits_{a}^{b}f_{k+1}(t,\overline{x}^{\rho}(t),\overline{x}^{\nabla}(t))\nabla t,\ldots,
\int\limits_{a}^{b}f_{k+n}(t,\overline{x}^{\rho}(t),\overline{x}^{\nabla}(t)) \nabla t \right).
\end{multline*}
The point $(0,0)$ is an extremal of $\mathcal{\overline{L}}$ subject to the constraint
$\mathcal{\overline{K}}=0$ and $\nabla\mathcal{\overline{K}}(0,0)\neq 0$.
By the Lagrange multiplier rule, there exists $\lambda \in\mathbb{R}$ such that
$\nabla\left(\mathcal{\overline{L}}(0,0)-\lambda\mathcal{\overline{K}}(0,0)\right)=0$.
Because $h_{1}(a)=h_{2}(b)=0$, we have
\begin{multline*}
\left.\frac{\partial\mathcal{\overline{L}}}
{\partial\varepsilon_{1}}\right|_{(0,0)}
=\sum\limits_{i=1}^{k}H_{i}^{'}\int\limits_{a}^{b}
\left(f_{iy}[\hat{x}]^{\Delta}(t)h_{1}^{\sigma}(t)
+f_{iv}[\hat{x}]^{\Delta}(t)h_{1}^{\Delta}(t)\right)\Delta t\\
+\sum\limits_{i=k+1}^{k+n}H_{i}^{'}\int\limits_{a}^{b}
\left(f_{iy}[\hat{x}]^{\nabla}(t)h_{1}^{\rho}(t)
+f_{iv}[\hat{x}]^{\nabla}(t)h_{1}^{\nabla}(t)\right)\nabla t.
\end{multline*}
Integrating by parts, and using $h_{1}(a)=h_{1}(b)=0$, gives
$$
\left.\frac{\partial\mathcal{\overline{L}}}
{\partial\varepsilon_{1}}\right|_{(0,0)}
=\int\limits_{a}^{b}h_{1}^{\Delta}(t)\xi(t)\Delta t
+\int\limits_{a}^{b}h_{1}^{\nabla}(t) \chi(t)\nabla t.
$$
Using \eqref{polaczenie calka delta}
of Theorem~\ref{polaczenie calka delta i nabla}
and \eqref{polaczenie rozniczka nabla}
of Theorem~\ref{polaczenie rozniczka delta i nabla}, we obtain that
$$
\left.\frac{\partial\mathcal{\overline{L}}}
{\partial\varepsilon_{1}}\right|_{(0,0)}=
\int\limits_{a}^{b}\left(h_{1}^{\Delta}\right)^{\rho}(t)\xi^{\rho}(t)\nabla t
+\int\limits_{a}^{b}h_{1}^{\nabla}(t) \chi(t)\nabla t
= \int\limits_{a}^{b}h_{1}^{\nabla}(t)\left(\xi^{\rho}(t)+\chi(t)\right)\nabla t
$$
and
\begin{multline*}
\left.\frac{\partial\mathcal{\overline{K}}}{\partial\varepsilon_{1}}\right|_{(0,0)}
=\sum\limits_{i=1}^{m}P_{i}^{'}\int\limits_{a}^{b}
\left(g_{iy}[\hat{x}]^{\Delta}(t)h^{\sigma}_{1}(t)
+g_{iv}[\hat{x}]^{\Delta}(t)h^{\Delta}_{1}(t)\right)\Delta t\\
+\sum\limits_{i=m+1}^{m+p}P_{i}^{'}\int\limits_{a}^{b}
\left(g_{iy}[\hat{x}]^{\nabla}(t)h^{\rho}_{1}(t)
+g_{iv}[\hat{x}]^{\nabla}(t)h^{\nabla}_{1}(t)\right)\nabla t.
\end{multline*}
Integrating by parts, and recalling that $h_{1}(a)=h_{1}(b)=0$,
$$
\left.\frac{\partial\mathcal{\overline{K}}}{\partial\varepsilon_{1}}\right|_{(0,0)}
=\int\limits_{a}^{b}h_{1}^{\Delta}(t)u(t)\Delta t
+ \int\limits_{a}^{b}h_{1}^{\nabla}(t)w(t)\nabla t.
$$
Using relation \eqref{polaczenie calka delta}
of Theorem~\ref{polaczenie calka delta i nabla}
and relation \eqref{polaczenie rozniczka nabla}
of Theorem~\ref{polaczenie rozniczka delta i nabla},
we obtain that
\begin{multline*}
\left.\frac{\partial\mathcal{\overline{K}}}{\partial\varepsilon_{1}}\right|_{(0,0)}
= \int\limits_{a}^{b}\left(h_{1}^{\Delta}\right)^{\rho}(t)u^{\rho}(t)\nabla t
+\int\limits_{a}^{b}h_{1}^{\nabla}(t) w(t)\nabla t
=\int\limits_{a}^{b}h_{1}^{\nabla}(t)\left(u^{\rho}(t)+w(t)\right)\nabla t.
\end{multline*}
Since
$\left.\frac{\partial\mathcal{\overline{L}}}
{\partial\varepsilon_{1}}\right|_{(0,0)}
-\lambda
\left.\frac{\partial\mathcal{\overline{K}}}
{\partial\varepsilon_{1}}\right|_{(0,0)}=0$, then
$\int\limits_{a}^{b}h_{1}^{\nabla}(t)
\left[
\xi^{\rho}(t)+\chi(t)-\lambda\left(u^{\rho}(t)+w(t)\right)
\right]
\nabla t=0$
for any $h_{1}\in C_{k+m,n+p}$.
Therefore, by the Dubois--Reymond Lemma~\ref{Dubois-Reymond nabla},
one has $\xi^{\rho}(t)+\chi(t)-\lambda\left(u^{\rho}(t)+w(t)\right)=c$,
where $c\in\mathbb{R}$.
\end{proof}

\begin{remark}
One can easily cover both normal and abnormal extremizers
with Theorem~\ref{twierdzenie iso},
if in the proof we use the abnormal
Lagrange multiplier rule \cite{book:vanBrunt}.
\end{remark}

% -------------------------------------

\section{Illustrative examples}
\label{sec:examples}

We begin with a non-autonomous problem.

\begin{example}
\label{ex:1:se}
Consider the problem
\begin{equation}
\label{iloraz}
\begin{gathered}
\mathcal{L}(x)=
\frac{\int\limits_{0}^{1} t x^{\Delta}(t) \Delta t}
{\int\limits_{0}^{1}(x^{\nabla}(t))^{2}\nabla t}
\longrightarrow \min, \\
x(0)=0, \quad x(1)=1.
\end{gathered}
\end{equation}
If $x$ is a local minimizer to problem \eqref{iloraz},
then the Euler--Lagrange equations of Corollary~\ref{cor iloraz} must hold, i.e.,
$$
\frac{1}{\mathcal{F}_{2}}\rho(t)-2\frac{\mathcal{F}_{1}}{\mathcal{F}_{2}^{2}}
x^{\nabla}(t)=c \quad \text{ and } \quad
\frac{1}{\mathcal{F}_{2}}t-2\frac{\mathcal{F}_{1}}{\mathcal{F}_{2}^{2}}
x^{\nabla}(\sigma(t))=c,
$$
where
$\mathcal{F}_{1}:=\mathcal{F}_{1}(x)=\int\limits_{0}^{1}t x^{\Delta}(t)\Delta t$
and $\mathcal{F}_{2}:=\mathcal{F}_{2}(x)=\int\limits_{0}^{1}(x^{\nabla}(t))^{2}\nabla t$.
Let us consider the second equation. Using \eqref{polaczenie rozniczka delta}
of Theorem~\ref{polaczenie rozniczka delta i nabla}, it can be written as
\begin{equation}
\label{1 row iloraz}
\frac{1}{\mathcal{F}_{2}}t-2\frac{\mathcal{F}_{1}}{\mathcal{F}_{2}^{2}}x^{\Delta}(t)=c.
\end{equation}
Solving equation \eqref{1 row iloraz} and using the boundary conditions $x(0)=0$ and $x(1)=1$,
\begin{equation}
\label{rownanie}
x(t)=
\frac{1}{2Q}\int\limits_{0}^{t}\tau\Delta\tau
-t\left(\frac{1}{2Q}\int\limits_{0}^{1}\tau\Delta\tau -1\right),
\end{equation}
where $Q:=\frac{\mathcal{F}_{1}}{\mathcal{F}_{2}}$.
Therefore, the solution depends on the time scale.
Let us consider two examples: $\mathbb{T}=\mathbb{R}$
and $\mathbb{T}=\left\{0,\frac{1}{2},1\right\}$.
With $\mathbb{T}=\mathbb{R}$, from \eqref{rownanie} we obtain
\begin{equation}
\label{rownanie w R}
x(t)=\frac{1}{4Q}t^{2}+\frac{4Q-1}{4Q}t,
\quad x^{\Delta}(t) = x^{\nabla}(t) = x'(t)=\frac{1}{2Q}t+\frac{4Q-1}{4Q}.
\end{equation}
Substituting \eqref{rownanie w R} into
$\mathcal{F}_{1}$ and $\mathcal{F}_{2}$ gives
$\mathcal{F}_{1}=\frac{12Q+1}{24Q}$ and
$\mathcal{F}_{2}=\frac{48Q^{2}+1}{48Q^{2}}$, that is,
\begin{equation}
\label{rownanie Q}
Q=\frac{2Q(12Q+1)}{48Q^{2}+1}.
\end{equation}
Solving equation \eqref{rownanie Q} we get
$Q\in\left\{\frac{3-2\sqrt{3}}{12},\frac{3+2\sqrt{3}}{12}\right\}$.
Because \eqref{iloraz} is a minimizing problem,
we select $Q=\frac{3-2\sqrt{3}}{12}$ and we get the extremal
\begin{equation}
\label{ex1:note:12}
x(t)=-(3+2\sqrt{3}) t^{2} + (4 + 2 \sqrt{3}) t.
\end{equation}
If $\mathbb{T}=\left\{0,\frac{1}{2},1\right\}$,
then from \eqref{rownanie} we obtain
$x(t)=\frac{1}{8Q}\sum\limits_{k=0}^{2t-1}k+\frac{8Q-1}{8Q}t$, that is,
\begin{equation*}
x(t)=
\begin{cases}
0, & \text{ if } t=0,\\
\frac{8Q-1}{16Q}, & \text{ if } t=\frac{1}{2},\\
1, & \text{ if } t=1.
\end{cases}
\end{equation*}
Direct calculations show that
\begin{equation}
\label{rozwiazanie dla hZ}
\begin{gathered}
x^{\Delta}(0)=\frac{x(\frac{1}{2})-x(0)}{\frac{1}{2}}=\frac{8Q-1}{8Q},
\quad x^{\Delta}\left(\frac{1}{2}\right)
=\frac{x(1)-x(\frac{1}{2})}{\frac{1}{2}}=\frac{8Q+1}{8Q},\\
x^{\nabla}\left(\frac{1}{2}\right)
=\frac{x(\frac{1}{2})-x(0)}{\frac{1}{2}}=\frac{8Q-1}{8Q},
\quad x^{\nabla}(1)=\frac{x(1)-x(\frac{1}{2})}{\frac{1}{2}}
=\frac{8Q+1}{8Q}.
\end{gathered}
\end{equation}
Substituting \eqref{rozwiazanie dla hZ} into the integrals
$\mathcal{F}_{1}$ and $\mathcal{F}_{2}$ gives
$$
\mathcal{F}_{1}=\frac{8Q+1}{32Q},
\quad \mathcal{F}_{2}=\frac{64Q^{2}+1}{64Q^{2}},
\quad Q=\frac{\mathcal{F}_{1}}{\mathcal{F}_{2}}=\frac{2Q(8Q+1)}{64Q^{2}+1}.
$$
Thus, we obtain the equation $64Q^{2}-16Q-1=0$.
The solutions to this equation are:
$Q\in \left\{\frac{1-\sqrt{2}}{8}, \frac{1+\sqrt{2}}{8}\right\}$.
We are interested in the minimum value $Q$, so we select
$Q = \frac{1-\sqrt{2}}{8}$ to get the extremal
\begin{equation}
\label{eq:dif:ex1}
x(t)
=\begin{cases}
0, & \hbox{ if } t=0,\\
1+\frac{\sqrt{2}}{2}, & \hbox{ if } t=\frac{1}{2},\\
1, &  \hbox{ if }t=1.
\end{cases}
\end{equation}
Note that the extremals \eqref{ex1:note:12} and \eqref{eq:dif:ex1} are different:
for \eqref{ex1:note:12} one has $x(1/2) = \frac{5}{4} + \frac{\sqrt{3}}{2}$.
\end{example}

We now present a problem where, in contrast with Example~\ref{ex:1:se},
the extremal does not depend on the time scale $\mathbb{T}$.

\begin{example}
\label{ex:2}
Consider the autonomous problem
\begin{equation}
\label{iloraz 2}
\begin{gathered}
\mathcal{L}(x)=\frac{\int\limits_{0}^{2}\left(x^{\Delta}(t)\right)^{2}\Delta t}{
\int\limits_{0}^{2}\left[x^{\nabla}(t)+\left(x^{\nabla}(t)\right)^{2}\right]\nabla t}
\longrightarrow \min,\\
x(0)=0, \quad x(2)=4.
\end{gathered}
\end{equation}
If $x$ is a local minimizer to \eqref{iloraz 2},
then the Euler--Lagrange equations must hold, i.e,
\begin{equation}
\label{1 row iloraz 2}
\frac{2}{\mathcal{F}_{2}}x^{\nabla}(t)
-\frac{\mathcal{F}_{1}}{\mathcal{F}_{2}^{2}}(2x^{\nabla}(t)+1) = c
\quad \text{ and } \quad
\frac{2}{\mathcal{F}_{2}}x^{\Delta}(t)
-\frac{\mathcal{F}_{1}}{\mathcal{F}_{2}^{2}}(2x^{\Delta}(t)+1) = c,
\end{equation}
where
$\mathcal{F}_{1}:=\mathcal{F}_{1}(x)
=\int\limits_{0}^{2}\left(x^{\Delta}(t)\right)^{2}\Delta t$
and $\mathcal{F}_{2}:=\mathcal{F}_{2}(x)
=\int\limits_{0}^{2}\left[x^{\nabla}(t)
+\left(x^{\nabla}(t)\right)^{2}\right]\nabla t$.
Choosing one of the equations of \eqref{1 row iloraz 2},
for example the first one, we get
\begin{equation}
\label{2 row iloraz 2}
x^{\nabla}(t)=\left(c+\frac{\mathcal{F}_{1}}{\mathcal{F}_{2}^{2}}\right)
\frac{\mathcal{F}_{2}^{2}}{2\mathcal{F}_{2}-2\mathcal{F}_{1}}.
\end{equation}
Using \eqref{2 row iloraz 2} with boundary conditions $x(0)=0$ and $x(2)=4$,
we obtain, for any given time scale $\mathbb{T}$, the extremal $x(t)=2t$.
\end{example}

In the previous two examples, the variational functional is given
by the ratio of a delta and a nabla integral. We now discuss
a variational problem where the composition is expressed by
the product of three time-scale integrals.

\begin{example}
Consider the problem
\begin{equation}
\label{ex product}
\begin{gathered}
\mathcal{L}(x)=
\left(\int\limits_{0}^{1} t x^{\Delta}(t) \Delta t\right)
\left(\int\limits_{0}^{1} x^{\Delta}(t)\left(1+t\right)\Delta t\right)
\left(\int\limits_{0}^{1}\left(x^{\nabla}(t)\right)^{2}\nabla t\right)
\longrightarrow \min,\\
x(0)=0,\quad x(1)=1.
\end{gathered}
\end{equation}
If $x$ is a local minimizer to problem \eqref{ex product},
then the Euler--Lagrange equations must hold, and we can write that
\begin{equation}
\label{eq:el:ex3}
\left(\mathcal{F}_{1}\mathcal{F}_{3}+\mathcal{F}_{2}\mathcal{F}_{3}\right)t
+\mathcal{F}_{1}\mathcal{F}_{3}+2\mathcal{F}_{1}\mathcal{F}_{2}
x^{\nabla}(\sigma(t))=c,
\end{equation}
where $c$ is a constant,
$\mathcal{F}_{1}:=\mathcal{F}_{1}(x)
=\int\limits_{0}^{1} t x^{\Delta}(t) \Delta t$,
$\mathcal{F}_{2}:=\mathcal{F}_{2}(x)
=\int\limits_{0}^{1} x^{\Delta}(t)\left(1+t\right)\Delta t$,
and $\mathcal{F}_{3}:=\mathcal{F}_{3}(x)
=\int\limits_{0}^{1}\left(x^{\nabla}(t)\right)^{2}\nabla t$.
Using relation \eqref{polaczenie rozniczka delta}, we can write \eqref{eq:el:ex3} as
\begin{equation}
\label{ex produkt row}
\left(\mathcal{F}_{1}\mathcal{F}_{3}+\mathcal{F}_{2}\mathcal{F}_{3}\right)t
+\mathcal{F}_{1}\mathcal{F}_{3}+2\mathcal{F}_{1}\mathcal{F}_{2}x^{\Delta}(t)=c.
\end{equation}
Using the boundary conditions $x(0)=0$ and $x(1)=1$,
we get from \eqref{ex produkt row} that
\begin{equation}
\label{ex product equation}
x(t)=\left(1+Q\int\limits_{0}^{1}\tau\Delta \tau\right) t
-Q \int\limits_{0}^{t}\tau\Delta \tau,
\end{equation}
where $Q=\frac{\mathcal{F}_{1}\mathcal{F}_{3}
+\mathcal{F}_{2}\mathcal{F}_{3}}{2\mathcal{F}_{1}\mathcal{F}_{2}}$.
Therefore, the solution depends on the time scale.
Let us consider $\mathbb{T}=\mathbb{R}$ and $\mathbb{T}=\left\{0,\frac{1}{2},1\right\}$.
With $\mathbb{T}=\mathbb{R}$, expression \eqref{ex product equation} gives
\begin{equation}
\label{ex product w R}
x(t)=\left(\frac{2+Q}{2}\right) t - \frac{Q}{2}t^{2},
\quad x^{\Delta}(t)= x^{\nabla}(t) = x'(t) = \frac{2+Q}{2}-Qt.
\end{equation}
Substituting \eqref{ex product w R}
into $\mathcal{F}_{1}$, $\mathcal{F}_{2}$ and $\mathcal{F}_{3}$ gives:
$$
\mathcal{F}_{1}=\frac{6-Q}{12},
\quad \mathcal{F}_{2}=\frac{18-Q}{12},
\quad \mathcal{F}_{3}=\frac{Q^{2}+12}{12}.
$$
One can proceed by solving the equation $Q^{3} - 18 Q^{2} + 60 Q - 72=0$,
to find the extremal $x(t)=\left(\frac{2+Q}{2}\right) t - \frac{Q}{2}t^{2}$
with $Q = 2 \sqrt[3]{9+\sqrt{17}} + \frac{9-\sqrt{17}}{8} \sqrt[3]{(9+\sqrt{17})^2} + 6$.

Let us consider now the time scale $\mathbb{T}=\left\{0,\frac{1}{2},1\right\}$.
From \eqref{ex product equation} we obtain
\begin{equation}
\label{ex product w hZ}
x(t)=\left(\frac{4+Q}{4}\right)t-\frac{Q}{4}
\sum\limits_{k=0}^{2t-1}k
=
\begin{cases}
0, & \hbox{ if } t=0\\
\frac{4+Q}{8}, & \hbox{ if } t=\frac{1}{2}\\
1, &  \hbox{ if } t=1.
\end{cases}
\end{equation}
Substituting \eqref{ex product w hZ}
into $\mathcal{F}_{1}$, $\mathcal{F}_{2}$
and $\mathcal{F}_{3}$, we obtain
\begin{equation*}
\mathcal{F}_{1}=\frac{4-Q}{16},
\quad \mathcal{F}_{2}=\frac{20-Q}{16},
\quad \mathcal{F}_{3}=\frac{Q^{2}+16}{16}
\end{equation*}
and the equation $Q^{3}-18Q^{2}+48Q-96=0$. Solving this equation,
we find the extremal
\begin{equation*}
x(t)=
\begin{cases}
0, & \hbox{ if } t=0\\
\frac{5+\sqrt[3]{5}+\sqrt[3]{25}}{4}, & \hbox{ if } t=\frac{1}{2}\\
1, &  \hbox{ if } t=1.
\end{cases}
\end{equation*}
\end{example}

Finally, we apply the results of Section~\ref{sub:sec:iso:p}
to an isoperimetric variational problem.

\begin{example}
Let us consider the problem of extremizing
\begin{equation*}
\mathcal{L}(x)=
\frac{
\int\limits_{0}^{1}(x^{\Delta}(t))^{2}\Delta t}
{\int\limits_{0}^{1} tx^{\nabla}(t)\nabla t}
\end{equation*}
subject to the boundary conditions
$x(0)=0$ and $x(1)=1$, and the constraint
$$
\mathcal{K}(t)=\int\limits_{0}^{1} tx^{\nabla}(t)\nabla t=1.
$$
Applying Theorem~\ref{twierdzenie iso},
we get the nabla differential equation
\begin{equation}
\label{iso equation}
\frac{2}{\mathcal{F}_{2}}x^{\nabla}(t)
- \left(\lambda + \frac{\mathcal{F}_{1}}{(\mathcal{F}_{2})^{2}}\right) t = c.
\end{equation}
Solving this equation, we obtain
\begin{equation}
\label{iso rownanie}
x(t)=\left(1-Q\int\limits_{0}^{1}\tau\nabla\tau\right)t
+Q\int\limits_{0}^{t}\tau\nabla \tau,
\end{equation}
where $Q=\frac{\mathcal{F}_{2}}{2}\left(\frac{\mathcal{F}_{1}}{(\mathcal{F}_{2})^{2}}+\lambda\right)$.
Therefore, the solution of equation \eqref{iso equation} depends on the time scale.
As before, let us consider $\mathbb{T}=\mathbb{R}$ and $\mathbb{T}=\left\{0,\frac{1}{2},1\right\}$.

For $\mathbb{T}=\mathbb{R}$, we obtain from \eqref{iso rownanie} that
$x(t)=\frac{2-Q}{2}t+\frac{Q}{2}t^{2}$.
Substituting this expression for $x$ into the integrals
$\mathcal{F}_{1}$ and $\mathcal{F}_{2}$,
gives $\mathcal{F}_{1}=\frac{Q^{2}+12}{12}$ and $\mathcal{F}_{2}=\frac{Q+6}{12}$.
Using the given isoperimetric constraint,
we obtain $Q=6$, $\lambda =8$, and
$x(t)=3t^{2}-2t$.

Let us consider now the time scale $\mathbb{T}=\left\{0,\frac{1}{2},1\right\}$.
From \eqref{iso rownanie} we have
$$
x(t)=\frac{4-3Q}{4}t+Q\sum\limits_{k=1}^{2t}\frac{k}{4}
=
\begin{cases}
0, & \hbox{ if } t=0,\\
\frac{4-Q}{8}, & \hbox{ if } t=\frac{1}{2},\\
1, & \hbox{ if } t=1.
\end{cases}
$$
Simple calculations show that
$$
\mathcal{F}_{1}=\sum\limits_{k=0}^{1}\frac{1}{2} \left(x^{\Delta}\left(\frac{k}{2}\right)\right)^{2}
=\frac{1}{2}\left(x^{\Delta}(0)\right)^{2}
+\frac{1}{2}\left(x^{\Delta}\left(\frac{1}{2}\right)\right)^{2}=\frac{Q^{2}+16}{16},
$$
$$
\mathcal{F}_{2}=\sum\limits_{k=1}^{2}\frac{1}{4}k x^{\nabla}\left(\frac{k}{2}\right)
=\frac{1}{4} x^{\nabla}\left(\frac{1}{2}\right)+\frac{1}{2}x^{\nabla}(1)=\frac{Q+12}{16}
$$
and $\mathcal{K}(t)=\frac{Q+12}{16}=1$. Therefore, $Q=4$, $\lambda=6$, and we have the extremal
$$
x(t)=
\begin{cases}
0, &  \hbox{ if } t \in \left\{0, \frac{1}{2}\right\},\\
1, &  \hbox{ if } t=1.
\end{cases}
$$
\end{example}

% -------------------------------------

\section*{Acknowledgements}

This work was supported by {\it FEDER} funds through
{\it COMPETE} --- Operational Programme Factors of Competitiveness
(``Programa Operacional Factores de Competitividade'')
and by Portuguese funds through the
{\it Center for Research and Development
in Mathematics and Applications} (University of Aveiro)
and the Portuguese Foundation for Science and Technology
(``FCT --- Funda\c{c}\~{a}o para a Ci\^{e}ncia e a Tecnologia''),
within project PEst-C/MAT/UI4106/2011
with COMPETE number FCOMP-01-0124-FEDER-022690.
Dryl was also supported by FCT
through the Ph.D. fellowship SFRH/\linebreak
BD/51163/2010.
The authors would like to thank an anonymous referee
for careful reading of the submitted manuscript and
for suggesting several useful changes.

% -------------------------------------

% ------------------------------------------------

\end{document}